\documentclass[12pt,a4paper]{article}

\usepackage[T2A]{fontenc}
\usepackage[utf8]{inputenc}
\usepackage[russian,english]{babel}

\usepackage{geometry}
\usepackage{setspace}
\usepackage{microtype}

\usepackage{mathptmx}

\usepackage{amsmath,amssymb,amsfonts,amsthm}
\usepackage{mathtools}
\usepackage{bm}
\usepackage{mathrsfs}
\usepackage{dsfont}
\usepackage{bbm}
\usepackage{stmaryrd}
\usepackage{yhmath}
\usepackage{cases}
\usepackage{tensor}
\usepackage{enumitem}

\usepackage{graphicx}
\usepackage{booktabs}

\usepackage{hyperref}
\hypersetup{
  colorlinks=true,
  linkcolor=red,
  citecolor=blue,
  urlcolor=blue
}

\theoremstyle{plain}

\newtheorem{theorem}{Theorem}
\newtheorem{lemma}{Lemma}
\newtheorem{proposition}{Proposition}
\newtheorem{corollary}{Corollary}

\theoremstyle{definition}
\newtheorem{definition}{Definition}
\newtheorem{assumption}{Assumption}

\theoremstyle{remark}
\newtheorem{remark}{Remark}

\numberwithin{equation}{section}

\newcommand{\R}{\mathbb{R}}

\newcommand{\PP}{\mathbb{P}}

\newcommand{\E}{\mathbb{E}}
\newcommand{\Var}{\operatorname{Var}}

\newcommand{\Law}{\operatorname{Law}}

\newcommand{\affmark}[1]{\textsuperscript{#1}}
\newcommand{\corrauth}{\textsuperscript{*}}

\begin{document}

\selectlanguage{english}

\begin{center}
{\Large\bfseries 
Uniform Lorden-type bounds for overshoot moments for standard exponential families:
small drift and an exponential correction}
\vspace{0.8em}
{
Kalimulina~E.Yu.\corrauth\affmark{1,2}
\quad
Kelbert~M.Ya.\affmark{3}
}
\vspace{0.4em}
{\itshape 
\\
We dedicate this article to the memory of Galina A.~Zverkina}
\vspace{0.8em}
\end{center}

\begin{minipage}{0.92\textwidth}
\small
\affmark{1}
Institute for Information Transmission Problems of RAS, 
Bolshoy Karetny per., Moscow 127051, Russia

\affmark{2}
Moscow State University (MSU), 
Leninskie Gory, Moscow 119991, Russia

\affmark{3}\,
National Research University Higher School of Economics, Department of Statistics and Data Analysis, Laboratory of Stochastic Analysis and Its Applications, Moscow, Russian Federation
\\[0.4em]
\corrauth\ Corresponding author: \texttt{elmira.yu.k@gmail.com}
\end{minipage}

\vspace{1.2em}

\noindent\textbf{Abstract.}
We study the overshoot \(R_b=S_{\tau(b)}-b\) of a random walk with independent identically
distributed increments from a standardised one-parameter exponential family, with primary
emphasis on the small-drift regime
\(\theta\downarrow0\).
Unlike the classical renewal-process setting with nonnegative increments,
we allow sign-changing increments and assume only a positive drift \(\mu_\theta>0\).
For each \(k\in\mathbb N\) we obtain Lorden-type moment bounds, uniform in the barrier \(b\),
for \(\E_\theta[R_b^k]\) with an explicit remainder term decaying exponentially in \(b\).
The proof reduces the problem to the renewal process of strict ascending ladder
heights and combines a simple bound for the limiting overshoot moments with a uniform exponential
estimate for the rate of convergence of the distribution functions of \(R_b\) to the limiting random variable \(R_\infty\)
as \(b\to\infty\), uniformly in \(\theta\in[0,\theta^\ast]\).
As a consequence, the classical constant \((k+2)/(k+1)\) arising in residual-life bounds
improves to \(C_k=1\) for sufficiently large \(b\) at fixed \(\theta\), and also
uniformly over all \(b\ge0\) in the small-drift regime.
Counterexamples are provided showing that the stronger inequality with \(k\mu_\theta\) in the denominator
cannot hold uniformly in \((b,\theta)\).
Finally, the exponential CDF estimate is interpreted in terms of optimal transport:
we obtain exponential convergence in the metric \(W_1\), a quantile coupling with
\(\E|\widetilde R_b-\widetilde R_\infty|=O(e^{-rb})\), error bounds for Lipschitz functionals
and a total-variation bound for smoothed distributions.



\vspace{0.4em}
\noindent\textbf{Keywords:}
random walk, overshoot, Lorden-type bounds, strict ascending ladder heights,
renewal theory, small drift, Wasserstein distance and coupling

\vspace{1.2em}

\selectlanguage{english}

\vspace{1.4em}

\selectlanguage{russian}


\section{Introduction}\label{sec:intro}

Let $(X_n)_{n\ge 1}$ be a sequence of independent identically distributed random variables (i.i.d.)
and let
\[
S_0:=0,\qquad S_n:=\sum_{i=1}^n X_i,\qquad n\ge 1.
\]
For $b>0$ set
\begin{equation}\label{eq:tau_def}
\tau(b):=\inf\{n\ge 1:\ S_n>b\},
\end{equation}
and define the \emph{overshoot}
\begin{equation}\label{eq:overshoot_def}
R_b:=S_{\tau(b)}-b.
\end{equation}

In the case of nonnegative increments, the following bound for the mean overshoot,
uniform in the level $b$, is known.

\begin{theorem}[Lorden's inequality {\rm(Lorden, 1970)}\cite{Lorden1970}]\label{thm:lorden1970}
Assume that $X_1,X_2,\dots$ are nonnegative a.\,s.\ and
\[
0<\mathbb{E}[X_1]<\infty,\qquad \mathbb{E}[X_1^2]<\infty.
\]
Then, in the notation of \eqref{eq:tau_def}--\eqref{eq:overshoot_def}, for every $b\ge 0$
the inequality
\begin{equation}\label{eq:lorden_ineq}
\mathbb{E}[R_b]\ \le\ \frac{\mathbb{E}[X_1^2]}{\mathbb{E}[X_1]}.
\end{equation}
holds.
\end{theorem}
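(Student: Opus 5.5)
The plan is the classical renewal-function argument via Wald's identity and a comparison of $\E[R_b]$ with its stationary (limiting) value. Write $\mu:=\E[X_1]$ and let $U(b):=\E[\tau(b)]$. Since the increments are nonnegative, $\tau(b)=1+N(b)$, where $N(b)=\#\{n\ge1: S_n\le b\}$ is the renewal counting function. Because $\mu>0$, $\tau(b)$ is integrable, and Wald's identity gives
\[
\E[R_b]=\mu\,\E[\tau(b)]-b=\mu\,U(b)-b .
\]
Bounding the mean overshoot is therefore equivalent to the upper bound $U(b)\le b/\mu+\E[X_1^2]/\mu^2$ on the renewal function, and this is what I will prove.

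To get it, I compare with the stationary (delayed) renewal process. Let $Y$ be independent of $(X_n)$ with the equilibrium density $\PP(X_1>y)/\mu$, $y\ge0$; then $\E[Y]=\E[X_1^2]/(2\mu)$. For the delayed walk $Y+S_n$ the renewal function is exactly linear: the expected number of delayed renewals in $[0,b]$ equals $b/\mu$. This is the standard stationarity fact, which can be checked via Laplace transforms. Conditioning on $Y=y$, the renewals after the first are $y$ plus a copy of the undelayed process, so
\[
\frac{b}{\mu}=\E\bigl[\mathbf 1\{Y\le b\}\,U(b-Y)\bigr].
\]
The key step is a subadditivity inequality for $U$: $U(b)\le U(b-y)+U(y)$ for $0\le y\le b$, where $U(s):=\E[\tau(s)]$, with $U(s)=1$ for $s<0$ by convention. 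It follows by counting renewals in $[0,b-y]$ and in $(b-y,b]$: the second block is dominated by a fresh copy started at the first passage over $b-y$, and that copy has at most $U(y)$ expected renewals in a window of length $y$ by the strong Markov property. Consequently $U(b-Y)\ge U(b)-U(Y)$. Together with $\mu U(y)-y=\E[R_y]$, this lets us rearrange.

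The rearrangement goes as follows. From $b/\mu\ge \E[U(b)-U(Y)]$ on $\{Y\le b\}$, plus a cruder handling of $\{Y>b\}$, one gets $\mu U(b)\le b+\mu\E[U(Y)]$. Then $\mu\E[U(Y)]=\E[Y]+\E[R_Y]$, which is circular. The cleaner route I would actually follow is Lorden's original one: show $\E[R_b]\le \E[R_\infty^{\ast}]\cdot 2$ directly, where $R_\infty^{\ast}$ is the spread (inspection) length with $\E=\E[X_1^2]/\mu$. To do this, note that $R_b\le X_{\tau(b)}$, the length of the interval straddling $b$. Then bound $\E[X_{\tau(b)}]$ by the inspection-paradox quantity
\[
\E[X_{\tau(b)}]=\int_{[0,b]}\E\bigl[X_1\mathbf 1\{X_1>b-s\}\bigr]\,dU(s),
\]
and use that $U$ increases by at most $U(h)\le 1+h/\mu\cdot$const over intervals of length $h$. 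The main obstacle is this last step, namely controlling increments of $U$ over short windows uniformly in location to get exactly the constant $\E[X_1^2]/\mu$. I expect to handle it via the subadditivity above, which gives $U(s+h)-U(s)\le U(h)$, together with the elementary bound $\mu U(h)\le h+\E[X_1\mathbf 1\{X_1\le\cdot\}]$. I would then integrate by parts against the tail $\PP(X_1>t)$ to arrive at $\int_0^\infty 2t\,\PP(X_1>t)\,dt/\mu=\E[X_1^2]/\mu$.
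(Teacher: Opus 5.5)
The paper gives no proof of this theorem. It is quoted from \cite{Lorden1970}, with alternative proofs referenced in \cite{CarlssonNerman1986,Chang1994}. Your proposal therefore has to stand on its own, and it has a genuine gap. Your preliminary facts are correct: $\E[R_b]=\mu U(b)-b$ by Wald, $b/\mu=\E[\mathbf 1\{Y\le b\}U(b-Y)]$, and $U(s+h)-U(s)\le U(h)$. But no step actually produces the constant $\E[X_1^2]/\mu$, and both routes you sketch provably cannot. Take $X_1\in\{1,2\}$ with probability $1/2$ each; this is admissible under the hypotheses, and small perturbations give non-lattice versions. Then $\mu=3/2$, $\E[X_1^2]/\mu=5/3$, and $U=1$ on $[0,1)$, $U=3/2$ on $[1,2)$. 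Your first route yields
\[
\E[R_b]\le\mu\,\E[U(Y)]=\int_0^\infty\PP(X_1>v)\,U(v)\,dv=1+\tfrac34=\tfrac74>\tfrac53 .
\]
So the defect of that route is not merely circularity: subadditivity used this way delivers a constant that is too large.

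Your second route rests on the claim $\E[X_{\tau(b)}]\le\E[X_1^2]/\mu$, and this claim is false. In the same example, for $b\in[1,2)$, one has $\E[X_{\tau(b)}]=\tfrac12\cdot2+\tfrac12\cdot\tfrac32=\tfrac74$, while $\E[R_b]=\tfrac94-b\le\tfrac54$, in accordance with Lorden. The failure is structural. Applying the window bound to $\int_{[0,b]}\E[X_1;X_1>b-s]\,dU(s)$ gives at best $\E[X_1\,U(X_1-)]$. Since $\mu U(x-)\ge x$ (Wald, letting $y\uparrow x$ in $\mu U(y)=y+\E[R_y]$), this quantity is always at least $\E[X_1^2]/\mu$; it equals $2$ in the example. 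No integration by parts against $\PP(X_1>t)$ can bring it down to $\E[X_1^2]/\mu$.

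There are two further slips. Your stated intermediate target $\E[R_b]\le 2\,\E[R^*_\infty]$, with $R^*_\infty$ the spread of mean $\E[X_1^2]/\mu$, is off by a factor $2$. And the inequality $\mu U(h)\le h+\E[X_1\mathbf 1\{X_1\le\cdot\}]$ is not a well-defined bound.

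What is missing is an argument that is genuinely sharper than window subadditivity combined with $R_b\le X_{\tau(b)}$. To close the proof you need something like Lorden's original argument or that of Carlsson and Nerman.
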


Inequality \eqref{eq:lorden_ineq} was proved in \cite{Lorden1970}; for alternative proofs
and some of its generalisations, see, for example, \cite{CarlssonNerman1986,Chang1994,Spouge2007,Svensson02,Janson83}.

We next consider the standard one-parameter exponential family
$F_{\theta}(d x)=e^{\theta x-\psi(\theta)}F_0(d x)$,
where $E_0[X]=0$,
allow sign-changing increments (in contrast to the assumptions of Theorem~\ref{thm:lorden1970}),
and assume only a positive drift
\[
\mu_\theta:=\mathbb{E}_\theta[X_1]>0.
\]
In this case, a natural tool is the consideration of the renewal process associated with strict
ascending ladder epochs~\cite{Wong2025,Lotov2002}.

\begin{definition}[Strict ascending ladder epochs and heights \cite{Feller71}]\label{def:ladder_epochs}
Set $T_+^{(0)}:=0$ and for $n\ge1$ define recursively
\[
T_+^{(n)}:=\inf\bigl\{m>T_+^{(n-1)}:\ S_m>S_{T_+^{(n-1)}}\bigr\}.
\]
Set $T_+:=T_+^{(1)}$ and $H_n:=S_{T_+^{(n)}}$, $n\ge0$.
\end{definition}

It follows from Definition~\ref{def:ladder_epochs} that
\[
T_+=\inf\{n\ge1:\ S_n>0\}.
\]

\begin{lemma}[\cite{Wong2025}]\label{lem:tau_is_ladder}
For each $b>0$ there exists an integer-valued random variable $v(b)\ge 1,$
such that
\[
T_+^{(v(b))}=\tau(b),
\]
where $~~~ v(b):=\inf\{n\ge1:\ H_n>b\}$.
\end{lemma}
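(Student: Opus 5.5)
The plan is to show that the first passage time over $b$ is a strict ascending ladder epoch, and to identify which one. We work on the event that all ladder epochs $T_+^{(n)}$ are finite. This holds almost surely because $\mu_\theta>0$: by the strong law $S_n\to+\infty$, so each $T_+^{(n)}<\infty$ a.s. Also $H_n\to\infty$, since $H_n$ is strictly increasing and $\sup_m S_m=\infty$. Hence $v(b)=\inf\{n\ge1: H_n>b\}$ is a.s. finite. Because $H_0=0<b$, we also have $v(b)\ge1$. Set $n:=v(b)$.

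First we show $T_+^{(n)}\ge\tau(b)$. Since $S_{T_+^{(n)}}=H_n>b$, the index $T_+^{(n)}$ belongs to the set $\{m\ge1: S_m>b\}$. Therefore $\tau(b)\le T_+^{(n)}$.

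Next we show $\tau(b)\ge T_+^{(n)}$, which is the main step. The key observation is that between consecutive ladder epochs the walk does not exceed the previous ladder height. Precisely, for $T_+^{(j-1)}<m<T_+^{(j)}$ we have $S_m\le S_{T_+^{(j-1)}}=H_{j-1}$; this is immediate from the definition of $T_+^{(j)}$ as an infimum. Consequently, for every $m<T_+^{(n)}$ we get $S_m\le \max_{0\le j\le n-1}H_j=H_{n-1}$, using the monotonicity of $(H_j)$. This covers both the case where $m$ is itself a ladder epoch $T_+^{(j)}$ with $j\le n-1$ and the case where $m$ lies strictly between two consecutive epochs. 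By minimality of $n=v(b)$ we have $H_{n-1}\le b$, so $S_m\le b$ for all $1\le m<T_+^{(n)}$. Hence $\tau(b)\ge T_+^{(n)}$.

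Combining the two inequalities gives $T_+^{(v(b))}=\tau(b)$. As a by-product, $R_b=H_{v(b)}-b$, which is the reduction to the ladder-height renewal process used later.

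The only delicate point is the finiteness of the ladder epochs. If some $T_+^{(j)}=\infty$, then $v(b)$ could be infinite and the identity would hold only in the degenerate sense $\infty=\infty$. The assumption of positive drift handles this via the strong law. Alternatively, one can state the identity on the event $\{\tau(b)<\infty\}$, where the same argument goes through verbatim.
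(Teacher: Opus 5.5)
The paper gives no proof of this lemma: it is quoted from \cite{Wong2025}, with only the definition of $v(b)$ stated. So there is no in-paper argument to compare against. Your proof is correct and complete, and it is the standard pathwise argument. You get $\tau(b)\le T_+^{(v(b))}$ because $H_{v(b)}>b$. You get $\tau(b)\ge T_+^{(v(b))}$ because every $S_m$ with $1\le m<T_+^{(v(b))}$ is at most the last ladder height before that epoch, and $H_{v(b)-1}\le b$.

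A few small points could be tightened:
\begin{itemize}
\item $v(b)\ge1$ holds by definition; it does not follow from $H_0=0<b$.
\item When $v(b)=1$, the bound $H_{v(b)-1}\le b$ comes from $H_0=0<b$, not from minimality of $v(b)$.
\item $H_n\to\infty$ does not follow from strict monotonicity alone. It follows from $\sup_n H_n=\sup_m S_m$, which your between-epochs observation supplies.
\item The lemma carries no drift hypothesis, and $\theta=0$ (zero drift) belongs to $\Theta$, so the strong-law argument does not cover every case. In that case $\tau(b)<\infty$ still holds a.s. because the nondegenerate walk oscillates. Your alternative formulation, reading the identity pathwise on $\{\tau(b)<\infty\}$, is therefore the cleaner statement. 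On that event, $\tau(b)$ is itself a strict ascending ladder epoch and all earlier epochs are finite.
\end{itemize}
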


\begin{definition}[Renewal function for ladder heights]\label{def:renewal_function}
For $\theta\in\Theta$ define the renewal function \cite{Borovkov2020}
\begin{equation}\label{eq:renewal_function}
U_\theta^+(x):=\sum_{n=0}^{\infty}\mathbb{P}_\theta\!\left(H_n\le x\right),\qquad x\ge0.
\end{equation}
Let $U_\theta^+(\mathrm{d}t)$ denote the measure on $[0,\infty)$ determined by the increments
$U_\theta^+((a,b])=U_\theta^+(b)-U_\theta^+(a)$.
\end{definition}

The key identity is the renewal equation for the tail of the overshoot
(see~\cite{Wong2025}):
\begin{equation}\label{eq:renewal_eq_tail}
\mathbb{P}_\theta(R_b>y)
=\int_{[0,b]}\mathbb{P}_\theta\!\bigl(S_{T_+}>b+y-t\bigr)\,U_\theta^+(\mathrm{d}t),
\qquad b>0,\ y>0.
\end{equation}

The following statement is obtained by a standard renewal-theoretic argument from the definition
\eqref{eq:renewal_function} and equation \eqref{eq:renewal_eq_tail} ~\cite{Wong2025}.

\begin{proposition}[Limiting distribution and limiting overshoot moments]\label{prop:limit_moment}
Suppose that $\mu_\theta=\mathbb{E}_\theta[X_1]>0$, and that the increment distribution satisfies
the standard non-lattice assumptions ensuring the applicability of the key renewal theorem
to the strict ascending ladder-height process \cite{Smith58,Cox62}.
Then there exists a random variable $R_\infty$ such that
\[
R_b \Rightarrow R_\infty,\qquad b\to\infty.
\]
Moreover, for each $k\in\mathbb{N}$ such that $\mathbb{E}_\theta\!\left[S_{T_+}^{\,k+1}\right]<\infty$,
the limiting formula \cite{Wong2025}
\begin{equation}\label{eq:limit_moment}
\lim_{b\to\infty}\mathbb{E}_\theta\!\left[R_b^{\,k}\right]
=
\frac{\mathbb{E}_\theta\!\left[S_{T_+}^{\,k+1}\right]}{(k+1)\,\mathbb{E}_\theta\!\left[S_{T_+}\right]}.
\end{equation}
holds.
\end{proposition}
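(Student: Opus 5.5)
The plan is to reduce everything to the ordinary renewal process generated by the strict ascending ladder heights and then apply the key renewal theorem. Since $\mu_\theta>0$, the random walk drifts to $+\infty$. Hence $T_+<\infty$ a.s., and $\E_\theta[T_+]<\infty$ by Wald's identity together with the standard ladder-epoch argument. The ladder heights $H_n$ therefore form a renewal process whose i.i.d.\ increments are distributed as $S_{T_+}>0$, with mean
\[
\nu_\theta:=\E_\theta[S_{T_+}]=\mu_\theta\,\E_\theta[T_+]\in(0,\infty)
\]
by Wald. By Lemma~\ref{lem:tau_is_ladder}, $R_b=H_{v(b)}-b$ is exactly the residual life (overshoot) of this renewal process at level $b$. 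The non-lattice hypothesis is assumed to carry over to the law of $S_{T_+}$, which is how the statement is phrased.

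\emph{Weak convergence.} Fix $y\ge0$ and start from the renewal equation \eqref{eq:renewal_eq_tail}. Substituting $s=b-t$ writes $\PP_\theta(R_b>y)$ as $z_y*U_\theta^+(b)$ with
\[
z_y(s):=\PP_\theta(S_{T_+}>s+y),\qquad s\ge 0.
\]
This $z_y$ is nonnegative and nonincreasing, and $\int_0^\infty z_y(s)\,ds=\E_\theta[(S_{T_+}-y)^+]\le\nu_\theta<\infty$. So $z_y$ is directly Riemann integrable, and the key renewal theorem \cite{Smith58} gives
\[
\lim_{b\to\infty}\PP_\theta(R_b>y)=\frac1{\nu_\theta}\int_y^\infty \PP_\theta(S_{T_+}>u)\,du .
\]
The right-hand side is a continuous tail function decreasing from $1$ (at $y=0$) to $0$. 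It therefore defines the law of a random variable $R_\infty$, namely the stationary excess distribution with density $\PP_\theta(S_{T_+}>u)/\nu_\theta$. Convergence of the tails at every $y$ gives $R_b\Rightarrow R_\infty$.

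\emph{Moments.} Write $\E_\theta[R_b^k]=\int_0^\infty k y^{k-1}\PP_\theta(R_b>y)\,dy$. By Fubini and the renewal equation this equals $Z_k*U_\theta^+(b)$ with
\[
Z_k(s):=\int_0^\infty k y^{k-1}\PP_\theta(S_{T_+}>s+y)\,dy=\E_\theta\bigl[((S_{T_+}-s)^+)^k\bigr].
\]
Again $Z_k$ is nonincreasing and nonnegative, and $\int_0^\infty Z_k(s)\,ds=\E_\theta[S_{T_+}^{k+1}]/(k+1)<\infty$ under the hypothesis. So $Z_k$ is directly Riemann integrable, and the key renewal theorem yields
\[
\lim_{b\to\infty}\E_\theta[R_b^k]=\frac{\E_\theta[S_{T_+}^{k+1}]}{(k+1)\nu_\theta},
\]
which is \eqref{eq:limit_moment}. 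The same value is $\E_\theta[R_\infty^k]$, as a direct integration against the excess density shows.

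The main obstacle is the monotone-to-DRI step. This is the standard fact that a monotone Lebesgue-integrable function is directly Riemann integrable, and I would simply cite it. A secondary point is justifying that $S_{T_+}$ inherits the non-lattice property, which the statement takes as an assumption. Integrability of $S_{T_+}^{k+1}$ is assumed explicitly, so no ladder-height moment estimates are needed here.
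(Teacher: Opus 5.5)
Your proposal is correct and is the standard renewal-theoretic argument the paper invokes without details. You write \eqref{eq:renewal_eq_tail}, and its $ky^{k-1}$-weighted integral, as the convolution of $U_\theta^+$ with a nonincreasing integrable (hence directly Riemann integrable) kernel and apply the key renewal theorem; your final check that the limit equals $\E_\theta[R_\infty^{\,k}]$ also supplies the identification the paper uses later in Lemma~\ref{lem:ladder_ratio_bound_ru}.

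One wording fix: Wald's identity presupposes $\E_\theta[T_+]<\infty$ rather than proving it. The finiteness comes from the classical duality $\E_\theta[T_+]=1/\PP_\theta(S_n>0\ \forall n\ge1)$ under positive drift, or from Lemma~\ref{lem:ETplus_exp_family} in the exponential-family setting.
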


We note that the statement of Proposition~\ref{prop:limit_moment} is valid for a broad class of
non-lattice distributions with positive drift.
We now fix a parametric class of increment distributions and consider small-drift regimes
in which Lorden-type bounds for overshoot moments, uniform in $b$, are required.

\begin{definition}[Standard exponential family]\label{def:standard_exp_family}
Let $F_0$ be a probability measure on $\mathbb{R}$, and let $\Theta=[0,\theta^*)$, $\theta^*\in [0,\infty]$
be a half-interval containing $0$
 such that
\[
\int_{\mathbb{R}} e^{\theta x}\,F_0(\mathrm{d}x)<\infty,\qquad \theta\in\Theta.
\]
Define
\[
\psi(\theta):=\ln\int_{\mathbb{R}} e^{\theta x}\,F_0(\mathrm{d}x),\qquad \theta\in\Theta,
\]
and the family of probability measures $\{F_\theta:\theta\in\Theta\}$ by
\begin{equation}\label{eq:exp_family}
F_\theta(\mathrm{d}x)=e^{\theta x-\psi(\theta)}\,F_0(\mathrm{d}x),\qquad \theta\in\Theta.
\end{equation}
The family $\{F_\theta\}$ is called \emph{standard} if the normalisation
\[
\mathbb{E}_0[X]=0,\qquad \mathrm{Var}_0(X)=1 \quad (X\sim F_0),
\]
holds and $F_0$ is \emph{strongly non-lattice}, i.e.
\[
\limsup_{|t|\to\infty}|\chi(t)|<1,
\qquad
\chi(t):=\int_{\mathbb{R}} e^{itx}\,F_0(\mathrm{d}x).
\]
\end{definition}




\begin{assumption}[Increment model]\label{ass:increments_exp_family}
The random variables $X_1,X_2,\dots$ are independent and identically distributed, and
$X_1\sim F_\theta$ for some $\theta\in\Theta$, where $\{F_\theta\}$ is a standard
exponential family in the sense of Definition~\ref{def:standard_exp_family}.
\end{assumption}

\begin{remark}[Non-lattice property and applicability of the key renewal theorem]
Since $F_\theta(\mathrm{d}x)=e^{\theta x-\psi(\theta)}F_0(\mathrm{d}x)$,
the measures $F_\theta$ and $F_0$ are equivalent and have the same support.
Consequently, if $F_0$ is (strongly) non-lattice, then $F_\theta$ is non-lattice for all $\theta\in\Theta$.
In particular, the key renewal theorem applies to the strict ascending ladder-height process
for each fixed $\theta$, provided that $\E_\theta[S_{T_+}]<\infty$.
\end{remark}

For $\theta\in\Theta$ let $\mathbb{P}_\theta$, $\mathbb{E}_\theta$ and
$\mathrm{Var}_\theta$ denote the probability, expectation and variance, respectively,
under $X_1\sim F_\theta$. Then
\begin{equation}\label{eq:mu_theta}
\mu_\theta:=\mathbb{E}_\theta[X_1]=\psi'(\theta)>0,\qquad
\psi''(\theta)=\mathrm{Var}_\theta(X_1).
\end{equation}

We note that in \eqref{eq:limit_moment} the factor $(k+1)$ arises naturally in the denominator.

To conclude the survey part, we also recall the known moment bound for the renewal process \cite{Chang1994,Sugakova2007}.
Assume that $X_1,X_2,\dots$ are nonnegative a.\,s.\ and set
\[
N(b):=\inf\{n\ge1:\ S_n>b\},\qquad \zeta_b:=S_{N(b)}-b,\qquad b\ge0.
\]
Then $(S_n)$ is a (discrete) renewal process with arrivals $X_i$,
and the quantity $\zeta_b$ is the residual lifetime. In this case $\zeta_b=R_b$.

It is known (cf.\ \cite{Chang1994}; also formula \textup{(11)} in \cite{Sugakova2007}) that for $k>0$, if
$\E[X_1]\in(0,\infty)$ and $\E[X_1^{k+1}]<\infty$, then the inequality
\begin{equation}\label{eq:known_sugakova_chang}
\E[\zeta_b^{\,k}]
\ \le\
\frac{k+2}{k+1}\,\frac{\E[X_1^{k+1}]}{\E[X_1]},
\qquad b\ge0.
\end{equation}
holds.
(For $k=1$ there is a stronger canonical Lorden bound without the factor $\frac{3}{2}$:
$\E[\zeta_b]\le \E[X_1^2]/\E[X_1]$.)

The principal interest in problems of applied probability is often connected with the mean overshoot
(the case $k=1$). Nevertheless, in what follows we work with moments of arbitrary order
$k\in\mathbb N$.

\textbf{Related literature (small drift and ladder quantities).}
Bounds for the overshoot and for ladder functionals of a random walk in regimes of
\emph{positive but small drift} were also discussed in the work of V.\,I.~Lotov~\cite{Lotov2002},
where upper bounds were obtained for the tail of the first nonnegative partial sum and for overshoot moments above
an arbitrary level under the assumption that the increment mean is positive and small,
and a bound for $\E[T_+]$ was also given (cf.\ the abstract and the references in~\cite{Lotov2002}).
We note that in~\cite{Lotov2002} Mogulskii's absolute bounds for moment functionals
of boundary problems~\cite{Mogulskii1973} are explicitly mentioned among the basic tools.

The aim of the present paper is to obtain \emph{bounds uniform in the level $b$} of Lorden type
for the overshoot moments $\E_\theta[R_b^{\,k}]$ in small-drift regimes $\theta\downarrow 0$
within the model of a standard exponential family.
In particular, we are interested in the possibility of replacing the factor $\frac{k+2}{k+1}$ in the classical
moment bound for renewal processes by the constant $C_k=1$:
to show that for a sufficiently large barrier $b$ (and, in the small-drift regime, even uniformly over all $b\ge0$)
one has
\[
\E_\theta[R_b^{\,k}] \ \le\ \frac{\E_\theta[(X_1^+)^{k+1}]}{\mu_\theta}.
\]

Counterexamples are given in Appendix~A which
concern the
 \emph{strengthened} form of the bound with the factor $k\mu_\theta,~ k>1$ in the denominator
and show that, in general, such a bound cannot hold simultaneously for all $\theta$ and all $b\ge 0$.

Alongside the theoretical problem of obtaining Lorden-type moment bounds, uniform in the level $b$,
for the overshoot $R_b$, we pursue an applied aim: to provide explicit error control in threshold-stopping
problems and regenerative models (queueing theory, reliability, etc.), since from
the identity $S_{\tau(b)}=b+R_b$ and Wald's identity (whenever $\E_\theta[\tau(b)]<\infty$) it follows that
$\E_\theta[\tau(b)]=(b+\E_\theta[R_b])/\mu_\theta$ when $\mu_\theta>0$.
Further, the exponential rate of convergence $\Law(R_b)\to\Law(R_\infty)$ is interpreted via
optimal transport/coupling (cf.\ Section~\ref{sec:coupling_viewpoint}), which yields the bound
$W_1(R_b,R_\infty)=O(e^{-rb})$ and the existence of a joint distribution
$(\widetilde R_b,\widetilde R_\infty)$ with $\E|\widetilde R_b-\widetilde R_\infty|=O(e^{-rb})$, and hence
explicit bounds for the error of replacing $R_b$ by $R_\infty$ in mean values of Lipschitz functionals.

\subsection*{Main result of the paper}

Fix $k\in\mathbb N$ and denote $x^+:=\max\{x,0\}$.
In Section~\ref{sec:main_results} it is shown that, within the model of a standard exponential family
(Definition~\ref{def:standard_exp_family}, Assumption~\ref{ass:increments_exp_family}),
it is possible to improve the classical moment bound of the form \eqref{eq:known_sugakova_chang}
to the constant $C_k=1$ in the regimes of a large barrier and small drift.

\subsubsection*{What is new in this paper}

Unlike the classical uniform bounds for the residual lifetime in a pure renewal process,
we consider sign-changing increments and use a reduction to renewal over strict ascending
ladder heights. Our contribution consists of the following:

\begin{itemize}
\item we derive an explicit bound for the overshoot moments $\E_\theta[R_b^{\,k}]$ with an exponentially small in $b$
correction, uniform in $\theta\in(0,\theta^\ast]$;
\item from this bound we obtain an explicit threshold $b_0(\theta,k)$ from which onward the constant improves to $C_k=1$;
\item we show that for sufficiently small drift (small $\theta$) the improvement $C_k=1$ becomes uniform
over all $b\ge0$.
\end{itemize}

More precisely, the main results reduce to the following three assertions.

\begin{enumerate}
\item \textbf{Bound with an exponential correction in the level $b$.}
There exist constants $C>0$, $r>0$ and $\theta^\ast>0$ such that for all
$\theta\in(0,\theta^\ast]$ and $b>0$ the bound (see Theorem~\ref{thm:exp_remainder_ru})
\[
\E_\theta[R_b^{\,k}]
\ \le\
\frac{1}{k+1}\,\frac{\E_\theta[(X_1^+)^{k+1}]}{\mu_\theta}
\ +\
C\,\frac{k\Gamma(k)}{r^k}\,e^{-rb},
\qquad
\Gamma(k)=\int_0^\infty z^{k-1}e^{-z}\,dz.
\]
holds.

\item \textbf{Improvement to $C_k=1$ for sufficiently large $b$.}
For fixed $\theta\in(0,\theta^\ast]$ and $k\in\mathbb N$ there exists a threshold $b_0(\theta,k)\ge0$
such that for all $b\ge b_0(\theta,k)$ one has (see Corollary~\ref{cor:Ck1_large_b_ru})
\[
\E_\theta[R_b^{\,k}]
\ \le\
\frac{\E_\theta[(X_1^+)^{k+1}]}{\mu_\theta}.
\]

\item \textbf{Small drift: improvement to $C_k=1$ uniformly over all $b\ge0$.}
There exists $\theta_k\in(0,\theta^\ast]$ such that for all $\theta\in(0,\theta_k]$ and all $b\ge0$
we have (see Corollary~\ref{cor:Ck1_small_drift_ru})
\[
\E_\theta[R_b^{\,k}]
\ \le\
\frac{\E_\theta[(X_1^+)^{k+1}]}{\mu_\theta}.
\]
\end{enumerate}


\subsection*{Structure of the paper}

The paper is organised as follows.
In Section~\ref{sec:intro} we introduce the notation and recall the standard reduction of the overshoot
problem to the renewal process of strict ascending ladder heights, in particular deriving
the renewal equation \eqref{eq:renewal_eq_tail} for the tail $\PP_\theta(R_b>y)$ through the renewal
measure $U_\theta^+$.

In Section~\ref{sec:main_results} we prove the bound \eqref{eq:exp_remainder_ru} on the basis of
the uniform exponential rate of convergence of the overshoot in the level $b$
(Proposition~\ref{prop:wong_prop11_ru}), and we also derive the improvement of the constant to $C_k=1$
in the regimes of a large barrier and small drift (Corollaries~\ref{cor:Ck1_large_b_ru}
and~\ref{cor:Ck1_small_drift_ru}).

In Section~\ref{sec:coupling_viewpoint} an applied interpretation of the resulting exponential CDF estimate is given in terms of the Wasserstein distance, coupling and smoothed total variation, leading to explicit approximation-error bounds for Lipschitz functionals and to an illustrative example of a threshold policy (Subsection~\ref{subsec:example_threshold_policy}).

 The appendices collect technical details and
present counterexamples showing that the strengthening of the bound to the form \eqref{eq:target-ru}
with $k\mu_\theta$ in the denominator cannot be required simultaneously for all parameter values $\theta$
and all levels $b\ge0$.
\section{Overshoot moment bounds: statements and proofs of the main results}\label{sec:main_results}

This section contains proofs of the results stated in the introduction.
The key external tool is the exponential rate of convergence, uniform in $\theta$,
of the overshoot in the level $b$ (Proposition~\ref{prop:wong_prop11_ru}; cf.\ \cite{Wong2025}).
We first prove the bound with an exponential correction (Theorem~\ref{thm:exp_remainder_ru}),
and then derive from it the improvement of the constant in the Lorden-type moment bound to $C_k=1$
(Corollaries~\ref{cor:Ck1_large_b_ru} and~\ref{cor:Ck1_small_drift_ru}).

\subsection{The classical moment bound and its improvement in the large-barrier regime}
\label{subsec:lorden_core}

Throughout this subsection we use the notation of Section~\ref{sec:intro}.
In particular, $\tau(b)$ and $R_b$ are defined in \eqref{eq:tau_def}--\eqref{eq:overshoot_def},
while $T_+$ and $H_n=S_{T_+^{(n)}}$ are defined in Definition~\ref{def:ladder_epochs}.
We also assume that Definition~\ref{def:standard_exp_family} and
Assumption~\ref{ass:increments_exp_family} hold; for $\theta>0$ we have $\mu_\theta>0$.

\textbf{Basic (known) bound with constant \texorpdfstring{$\frac{k+2}{k+1}$}{(k+2)/(k+1)}}. Applying \eqref{eq:known_sugakova_chang} to the renewal process of strict ascending ladder
heights $(H_n)$ and then estimating from above the moments of $S_{T_+}$ by $(X_1^+)^{k+1}$ and $\mu_\theta$
(cf.\ \eqref{eq:mu_theta}), one obtains a bound of the form
\[
\sup_{b\ge 0}\E_\theta[R_b^{\,k}]
\ \le\
\frac{k+2}{k+1}\,\frac{\E_\theta[(X_1^+)^{k+1}]}{\mu_\theta},
\]
which, in essentially a similar form,
  is already present in the literature \cite{Sugakova2007,Chang1994}.

We are interested in how to \emph{improve the constant} to $C_k=1$ in regimes
natural for the problem "small drift + large barrier"\footnote{In works such as \cite{Lotov2002,Mogulskii1973}, ladder functionals are also analysed under a small-drift assumption. The distinction of our work lies in the improvement of the constant to $C_k=1$ via the exponential rate of convergence.}.


We note that the counterexamples from Appendix~A show the impossibility of
a further strengthening to the form \eqref{eq:target-ru} with the factor $k\mu_\theta$ in the denominator.

\subsubsection*{Key known fact: uniform exponential rate (see \cite{Wong2025,Chang1992})}

\begin{proposition}[Uniform exponential convergence of the overshoot \cite{Wong2025}%
]
\label{prop:wong_prop11_ru}
Let $\{F_\theta:\theta\ge 0\}$ be a standard exponential family and let $F_0$ be strongly non-lattice.
Then there exist constants $C>0$, $r>0$ and $\theta^\ast>0$ such that for all $b>0$, $y>0$
\begin{equation}\label{eq:wong_cdf_rate_ru}
\sup_{\theta\in[0,\theta^\ast]}
\bigl|\PP_\theta(R_b\le y)-\PP_\theta(R_\infty\le y)\bigr|
\ \le\ C e^{-r(b+y)}.
\end{equation}
Consequently, for any $k>0$ and all $b>0$ we have
\begin{equation}\label{eq:wong_moment_rate_ru}
\sup_{\theta\in[0,\theta^\ast]}
\bigl|\E_\theta[R_b^{\,k}]-\E_\theta[R_\infty^{\,k}]\bigr|
\ \le\
C\,\frac{k\Gamma(k)}{r^k}\,e^{-rb},
\qquad \Gamma(k)=\int_0^\infty z^{k-1}e^{-z}\,dz.
\end{equation}
\end{proposition}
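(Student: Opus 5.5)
The CDF estimate \eqref{eq:wong_cdf_rate_ru} is the external input from \cite{Wong2025}; we take it as given. What needs an argument is the passage to the moment bound \eqref{eq:wong_moment_rate_ru}. The plan is to write both moments through their tails and integrate the pointwise CDF difference. For a nonnegative random variable $Z$ and $k>0$,
\[
\E[Z^k]=\int_0^\infty k y^{k-1}\,\PP(Z>y)\,dy .
\]
Both $R_b$ and $R_\infty$ are strictly positive, since the overshoot over a strict barrier is positive, so this representation applies to each. Subtracting the two and using $\PP(Z>y)=1-\PP(Z\le y)$ gives
\[
\bigl|\E_\theta[R_b^{k}]-\E_\theta[R_\infty^{k}]\bigr|
\le \int_0^\infty k y^{k-1}\,\bigl|\PP_\theta(R_b\le y)-\PP_\theta(R_\infty\le y)\bigr|\,dy .
\]

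Next we insert \eqref{eq:wong_cdf_rate_ru}, which holds uniformly in $\theta\in[0,\theta^\ast]$. This bounds the right-hand side by
\[
C e^{-rb}\int_0^\infty k y^{k-1}e^{-ry}\,dy .
\]
The substitution $z=ry$ turns the integral into $k\,\Gamma(k)/r^k$, which is exactly the constant in \eqref{eq:wong_moment_rate_ru}. Since the bound does not depend on $\theta$, we may take the supremum over $\theta\in[0,\theta^\ast]$.

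The one point needing care is that the subtraction must be legitimate: both moments have to be finite, or at least their difference must be controlled without an $\infty-\infty$ issue. The way around this is to avoid subtracting separately. Write the difference of tails as a single integrand $ky^{k-1}\bigl(\PP_\theta(R_\infty\le y)-\PP_\theta(R_b\le y)\bigr)$, which is absolutely integrable by the exponential bound. This shows $\E_\theta[R_b^k]<\infty$ if and only if $\E_\theta[R_\infty^k]<\infty$, with the stated bound on the difference. Finiteness of $\E_\theta[R_\infty^k]$ itself follows from \eqref{eq:wong_cdf_rate_ru}: letting $b\to\infty$ along the tail representation gives exponential tails for $R_\infty$ (equivalently, one can use \eqref{eq:limit_moment}, since exponential families have all moments of $S_{T_+}$ for $\theta<\theta^\ast$). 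At $\theta=0$ the walk is driftless and $R_\infty$ should be read as the limit law provided in \cite{Wong2025}; the argument is unchanged, since only \eqref{eq:wong_cdf_rate_ru} is used.
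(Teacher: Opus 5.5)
This is correct and follows the paper's route: \eqref{eq:wong_cdf_rate_ru} is imported from \cite{Wong2025}, and the moment bound follows by tail integration, $\E[Z^k]=k\int_0^\infty y^{k-1}\PP(Z>y)\,dy$, giving $Ce^{-rb}\int_0^\infty ky^{k-1}e^{-ry}\,dy=C\,k\Gamma(k)r^{-k}e^{-rb}$. One side remark is wrong: letting $b\to\infty$ in \eqref{eq:wong_cdf_rate_ru} says nothing about the tails of $R_\infty$ (it only gives $\PP_\theta(R_b\le y)\to\PP_\theta(R_\infty\le y)$), so finiteness of $\E_\theta[R_\infty^k]$ must come from your alternative via \eqref{eq:limit_moment} and moments of $S_{T_+}$ (e.g.\ $\E_\theta[S_{T_+}^{k+1}]\le\E_\theta[T_+]\,\E_\theta[(X_1^+)^{k+1}]$ for $\theta>0$), a point the paper leaves implicit.
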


\begin{remark}
Estimate \eqref{eq:wong_moment_rate_ru} follows from \eqref{eq:wong_cdf_rate_ru}
by tail integration: $\E[Z^k]=k\int\limits_0^\infty y^{k-1}\PP(Z>y)\,dy$.
\end{remark}

\subsubsection*{The bound \texorpdfstring{$\E_\theta[R_b^k]$}{E[R_b^k]} with an exponential correction}

\begin{lemma}\label{lem:ETplus_exp_family}
Assume that Definition~\ref{def:standard_exp_family} and Assumption~\ref{ass:increments_exp_family} hold.
Then for every $\theta\in\Theta$ such that $\theta>0$, we have $\E_\theta[T_+]<\infty$, where $T_+:=\inf\{n\ge1:\ S_n>0\}$.
\end{lemma}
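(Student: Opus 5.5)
Fix $\theta\in\Theta$ with $\theta>0$. By \eqref{eq:mu_theta} and strict convexity of $\psi$ (note $\psi''(\theta)=\Var_\theta(X_1)>0$, since $F_0$ is non-degenerate with unit variance), we get $\mu_\theta=\psi'(\theta)>\psi'(0)=\E_0[X]=0$. So the increments have a finite positive mean. To see that the mean is finite, pick $\theta<\theta'\in\Theta$ (the interval $\Theta$ is half-open, so such $\theta'$ exists); then $\E_\theta|X_1|\le C\,\E_0 e^{\theta' X_1^+}+\E_0|X_1|<\infty$. The plan is to show $\E_\theta[T_+]<\infty$ by a stopping-time argument based on Wald's identity applied to truncated stopping times.

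\textbf{Step 1 (Wald's identity at a bounded time).} For $n\in\mathbb N$, the time $T_+\wedge n$ is a bounded stopping time. Wald's identity therefore gives
\[
\E_\theta[S_{T_+\wedge n}]=\mu_\theta\,\E_\theta[T_+\wedge n].
\]

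\textbf{Step 2 (upper bound on the left-hand side).} On $\{T_+\le n\}$ we have $S_{T_+\wedge n}=S_{T_+}=X_{T_+}+S_{T_+-1}\le X_{T_+}^+$, because $S_{T_+-1}\le 0$ by the definition of $T_+$. On $\{T_+>n\}$ we have $S_n\le 0$. Hence
\[
S_{T_+\wedge n}\le X_{T_+\wedge n}^+\le \sum_{i=1}^{T_+\wedge n}X_i^+ .
\]
This crude bound is useless on its own, since it is itself of order $\E[T_+\wedge n]$. The refinement is to apply Wald's identity instead to the walk with truncated increments $X_i^{(c)}:=X_i\wedge c$. Choose $c$ so large that $\mu^{(c)}:=\E_\theta[X_1\wedge c]>0$, which is possible by monotone convergence. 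Let $S^{(c)}_n$ be the corresponding partial sums. Since $S^{(c)}_m\le S_m$ for every $m$, we have $S^{(c)}_{m}\le S_m\le 0$ for all $m<T_+$. Consequently $S^{(c)}_{T_+\wedge n}\le c$, both when $T_+\le n$ (the last step adds at most $c$ to a non-positive value) and when $T_+>n$ (the value is $\le 0$). Wald's identity for the truncated walk then gives
\[
\mu^{(c)}\,\E_\theta[T_+\wedge n]=\E_\theta[S^{(c)}_{T_+\wedge n}]\le c .
\]

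\textbf{Step 3 (conclusion).} Letting $n\to\infty$ and using monotone convergence yields $\E_\theta[T_+]\le c/\mu^{(c)}<\infty$.

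The main obstacle is exactly the issue met in Step 2: the overshoot of the untruncated walk at time $T_+$ is not bounded a priori, so Wald's identity cannot be applied directly. Truncating the increments from above removes this difficulty while preserving positive drift, and every other step is routine.

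An alternative route would be the Spitzer–Baxter identity $\E T_+=\exp\{\sum_n n^{-1}\PP(S_n\le 0)\}$. Finiteness of the series would then follow from the Cramér–Chernoff bound $\PP_\theta(S_n\le 0)\le e^{-n(\psi(\theta)-\inf_{s}\psi(s))}$, which decays exponentially because $\mu_\theta>0$. The truncation argument above is more elementary, so I would use it.
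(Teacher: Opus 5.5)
Your proof is correct, but it takes a different route from the paper. The paper never needs $\mu_\theta$ at all. It uses the tilting identity $\E_\theta[e^{-\theta X_1}]=e^{-\psi(\theta)}$ and notes $\psi(\theta)>0$ for $\theta>0$. It then applies Markov's inequality on $\{T_+>n\}\subset\{S_n\le 0\}$ to get $\PP_\theta(T_+>n)\le e^{-n\psi(\theta)}$ and sums over $n$. This is the same exponential estimate you mention in your Spitzer--Baxter aside (take $s=\theta$), but applied directly to $\PP_\theta(T_+>n)$, so no fluctuation identity is needed. Your argument is the classical renewal-theoretic one: establish $0<\mu_\theta<\infty$, truncate the increments from above so that $S^{(c)}_{T_+\wedge n}\le c$, and apply Wald's identity at the bounded times $T_+\wedge n$. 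The key steps all check out, namely $S^{(c)}_m\le S_m\le 0$ for $m<T_+$, the choice of $c$ with $\mu^{(c)}>0$, and monotone convergence, which also rules out $\PP_\theta(T_+=\infty)>0$.

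As for what each route buys, the paper's proof is shorter and gives a geometric tail for $T_+$. That yields finiteness of all moments and of $\E_\theta[e^{\lambda T_+}]$ for $\lambda<\psi(\theta)$, with the explicit bound $\E_\theta[T_+]\le(1-e^{-\psi(\theta)})^{-1}$. It relies essentially on the exponential-family structure. Your proof is distribution-free: it needs only $\E X_1^-<\infty$ and $\E X_1>0$, and it controls only the first moment. On the other hand, take your bound $c/\mu^{(c)}$ with $c=c(\theta)$ chosen so that $\E_\theta[(X_1-c)^+]\le\mu_\theta/2$, which is possible thanks to the exponential right tail of $F_0$. It is then of order $\mu_\theta^{-1}\ln(1/\mu_\theta)$ as $\theta\downarrow0$, which is sharper than the paper's $(1-e^{-\psi(\theta)})^{-1}\approx 2/\theta^2$ in the small-drift regime. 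For the lemma itself, only finiteness matters.
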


\begin{proof}
Set $\eta:=\theta$. Then from \eqref{eq:exp_family} we obtain
\[
\E_\theta\bigl[e^{-\eta X_1}\bigr]
=\int e^{-\theta x}\,e^{\theta x-\psi(\theta)}\,F_0(dx)
=e^{-\psi(\theta)}.
\]
The function $\psi$ is strictly convex, with $\psi(0)=0$ and $\psi'(0)=\E_0[X_1]=0$,
therefore $\psi(\theta)>0$ for $\theta>0$, and hence $\E_\theta[e^{-\theta X_1}]<1$.
Further, on the event $\{T_+>n\}$ we have $S_n\le0$, so that $e^{-\theta S_n}\ge 1$, and
\[
\PP_\theta(T_+>n)\le \E_\theta[e^{-\theta S_n}]
=\bigl(\E_\theta[e^{-\theta X_1}]\bigr)^n
=e^{-n\psi(\theta)}.
\]
Summing, we obtain $\E_\theta[T_+]<\infty$.
\end{proof}

\begin{lemma}\label{lem:ladder_ratio_bound_ru}
Let $\theta>0$ and $k\in\mathbb N$. Then
\begin{equation}\label{eq:rinfty_bound_ru}
\E_\theta[R_\infty^{\,k}]
\ \le\
\frac{1}{k+1}\,\frac{\E_\theta[(X_1^+)^{k+1}]}{\mu_\theta}.
\end{equation}
\end{lemma}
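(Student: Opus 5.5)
We plan to combine the limiting moment formula \eqref{eq:limit_moment} with two elementary comparisons between the ladder height $S_{T_+}$ and the increment $X_1$. Lemma~\ref{lem:ETplus_exp_family} gives $\E_\theta[T_+]<\infty$. Wald's identity applied at the stopping time $T_+$ then yields
\[
\E_\theta[S_{T_+}]=\mu_\theta\,\E_\theta[T_+]\ \ge\ \mu_\theta ,
\]
since $T_+\ge 1$. This handles the denominator.

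For the numerator, the key observation is that on $\{T_+=n\}$ we have $S_{n-1}\le 0$ and $S_n>0$. Hence
\[
0<S_{T_+}=S_{T_+-1}+X_{T_+}\le X_{T_+}^+ ,
\]
and therefore $S_{T_+}^{k+1}\le \sum_{i=1}^{T_+}(X_i^+)^{k+1}$. The event $\{T_+\ge i\}$ depends only on $X_1,\dots,X_{i-1}$. So Wald's identity (equivalently, summing $\E[(X_i^+)^{k+1}\mathbf 1\{T_+\ge i\}]$ over $i$) gives
\[
\E_\theta[S_{T_+}^{k+1}]\le \E_\theta[T_+]\,\E_\theta[(X_1^+)^{k+1}] .
\]
The right-hand side is finite because $F_\theta$ has exponential moments of positive order. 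To see this, note that $\theta<\theta^*$ and $\Theta$ is a half-open interval, so $e^{\theta' x}$ is $F_0$-integrable for some $\theta'>\theta$, which gives $\E_\theta[e^{(\theta'-\theta)X_1}]<\infty$. In particular $\E_\theta[S_{T_+}^{k+1}]<\infty$, so Proposition~\ref{prop:limit_moment} applies. Non-latticeness of $F_\theta$ follows from the remark after Assumption~\ref{ass:increments_exp_family}; I take as given that this transfers to the ladder-height distribution.

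Next we identify $\E_\theta[R_\infty^k]$ with the limit in \eqref{eq:limit_moment}. From $R_b\Rightarrow R_\infty$ we need convergence of the $k$-th moments, which requires uniform integrability of $R_b^k$. One way is to use \eqref{eq:known_sugakova_chang} applied to the ladder renewal process with exponent $k+1$ replaced by $k+2$. This gives $\sup_b\E_\theta[R_b^{k+1}]\le C\,\E_\theta[S_{T_+}^{k+2}]/\E_\theta[S_{T_+}]<\infty$, and finiteness follows from the same Wald argument with $k+2$. Alternatively, use the density of $R_\infty$, namely $\PP(S_{T_+}>y)/\E[S_{T_+}]$, and compute $\E[R_\infty^k]$ directly by Fubini; this gives exactly $\E[S_{T_+}^{k+1}]/((k+1)\E[S_{T_+}])$ without any limit interchange. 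I would prefer this route.

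Combining the two comparisons,
\[
\E_\theta[R_\infty^k]=\frac{\E_\theta[S_{T_+}^{k+1}]}{(k+1)\E_\theta[S_{T_+}]}
\le \frac{\E_\theta[T_+]\,\E_\theta[(X_1^+)^{k+1}]}{(k+1)\,\mu_\theta\,\E_\theta[T_+]} ,
\]
and the factor $\E_\theta[T_+]$ cancels, giving \eqref{eq:rinfty_bound_ru}. Here it matters that we use the exact Wald identity $\E[S_{T_+}]=\mu_\theta\E[T_+]$ in the denominator, not merely $\E[S_{T_+}]\ge\mu_\theta$. The main obstacle is the pathwise inequality $S_{T_+}\le X_{T_+}^+$ together with the Wald-type bound for $\sum_{i\le T_+}(X_i^+)^{k+1}$; the rest is bookkeeping about the identification of $R_\infty$ and integrability.
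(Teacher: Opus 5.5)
Your argument is correct and is essentially the paper's proof. Both use the pathwise bound $0<S_{T_+}\le X_{T_+}^+$, the crude estimate $S_{T_+}^{k+1}\le\sum_{i=1}^{T_+}(X_i^+)^{k+1}$, Wald's identity for this sum and for $S_{T_+}$, and cancellation of $\E_\theta[T_+]$ in the ratio \eqref{eq:limit_moment}; your extra checks (finiteness of $\E_\theta[(X_1^+)^{k+1}]$ via the open right endpoint of $\Theta$, and identifying $\E_\theta[R_\infty^k]$ from the stationary-excess density rather than as $\lim_{b\to\infty}\E_\theta[R_b^k]$) fill in points the paper passes over silently.
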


\begin{proof}
By Proposition~\ref{prop:limit_moment} (cf.\ \eqref{eq:limit_moment}) \cite{Wong2025}, we have
\[
\E_\theta[R_\infty^{\,k}]
=
\frac{\E_\theta[S_{T_+}^{\,k+1}]}{(k+1)\E_\theta[S_{T_+}]}.
\]
Since $S_{T_+-1}\le 0<S_{T_+}$, it follows that $0<S_{T_+}\le X_{T_+}^+$, whence
\[
S_{T_+}^{\,k+1}\le (X_{T_+}^+)^{k+1}\le \sum_{i=1}^{T_+}(X_i^+)^{k+1}.
\]
Taking expectations and applying Wald's identity (which is applicable because for $\theta>0$
we have $\E_\theta[T_+]<\infty$, and $\E_\theta[(X_1^+)^{k+1}]<\infty$ in the exponential family), we obtain
\[
\E_\theta[S_{T_+}^{\,k+1}]
\le
\E_\theta\!\Big[\sum_{i=1}^{T_+}(X_i^+)^{k+1}\Big]
=
\E_\theta[T_+]\,\E_\theta[(X_1^+)^{k+1}].
\]
On the other hand, by Wald's identity
\[
\E_\theta[S_{T_+}]
=
\E_\theta\!\Big[\sum_{i=1}^{T_+}X_i\Big]
=
\E_\theta[T_+]\,\mu_\theta.
\]
Dividing one by the other, we obtain
\eqref{eq:rinfty_bound_ru}.
\end{proof}

\begin{theorem}[Overshoot moment bound with an exponential correction]
\label{thm:exp_remainder_ru}
Assume that Definition~\ref{def:standard_exp_family} and Assumption~\ref{ass:increments_exp_family} hold.
Then there exist $C>0$, $r>0$ and $\theta^\ast>0$ such that for any $k\in\mathbb N$, any $b>0$ and
any $\theta\in(0,\theta^\ast]$,
\begin{equation}\label{eq:exp_remainder_ru}
\E_\theta[R_b^{\,k}]
\ \le\
\frac{1}{k+1}\,\frac{\E_\theta[(X_1^+)^{k+1}]}{\mu_\theta}
\ +\
C\,\frac{k\Gamma(k)}{r^k}\,e^{-rb}.
\end{equation}
\end{theorem}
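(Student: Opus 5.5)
The plan is to decompose the moment into its limiting value plus a remainder, and to bound each piece with a result already available. For fixed $\theta\in(0,\theta^\ast]$ and $b>0$, where $\theta^\ast$, $C$, $r$ are the constants of Proposition~\ref{prop:wong_prop11_ru}, we write
\[
\E_\theta[R_b^{\,k}]
=\E_\theta[R_\infty^{\,k}]+\bigl(\E_\theta[R_b^{\,k}]-\E_\theta[R_\infty^{\,k}]\bigr).
\]
The first term is handled by Lemma~\ref{lem:ladder_ratio_bound_ru}, which gives exactly the main term $\frac{1}{k+1}\E_\theta[(X_1^+)^{k+1}]/\mu_\theta$. The second term is at most $C\,k\Gamma(k)r^{-k}e^{-rb}$ by \eqref{eq:wong_moment_rate_ru}. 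Adding the two bounds yields \eqref{eq:exp_remainder_ru}. The constants $C,r,\theta^\ast$ come from Proposition~\ref{prop:wong_prop11_ru} and do not depend on $k$, $b$ or $\theta$, as the statement requires.

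Several points need checking along the way.

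\begin{enumerate}
\item For $\theta\in(0,\theta^\ast]$ we need $\theta\in\Theta$ and $\mu_\theta>0$. If necessary we shrink $\theta^\ast$ below the endpoint of $\Theta$. Then $\psi'(\theta)>\psi'(0)=0$ by strict convexity.
\item Lemma~\ref{lem:ladder_ratio_bound_ru} uses Lemma~\ref{lem:ETplus_exp_family} together with $\E_\theta[(X_1^+)^{k+1}]<\infty$. The latter holds because for $\theta>0$ the density $e^{\theta x-\psi(\theta)}$ makes every positive moment finite whenever $\E_0[e^{\theta' X}]<\infty$ for some $\theta'>\theta$. If $\theta^\ast$ sits at the boundary of $\Theta$, we take $\theta^\ast$ strictly inside $\Theta$.
\item Finiteness of $\E_\theta[S_{T_+}^{\,k+1}]$, needed for \eqref{eq:limit_moment}, follows from the domination $S_{T_+}^{k+1}\le\sum_{i\le T_+}(X_i^+)^{k+1}$ established in that lemma.
\end{enumerate}

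The main obstacle is passing from \eqref{eq:wong_cdf_rate_ru} to \eqref{eq:wong_moment_rate_ru}, which we use in its one-sided form. We would derive it by tail integration:
\[
\bigl|\E_\theta[R_b^k]-\E_\theta[R_\infty^k]\bigr|\le k\int_0^\infty y^{k-1}\bigl|\PP_\theta(R_b>y)-\PP_\theta(R_\infty>y)\bigr|\,dy\le Ce^{-rb}\,k\int_0^\infty y^{k-1}e^{-ry}\,dy.
\]
The last integral equals $C\,k\Gamma(k)r^{-k}e^{-rb}$. This step also requires both moments to be finite so that the subtraction is legitimate. For $R_\infty$ this follows from the first step. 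For $R_b$ it follows from the integral bound itself, since the integrand is dominated. Alternatively, the classical estimate \eqref{eq:known_sugakova_chang} applied to the ladder-height renewal process gives finiteness directly. Apart from this, the argument is a direct combination of the two earlier results.
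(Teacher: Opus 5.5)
Your proposal is correct and follows the paper's proof. You bound $\E_\theta[R_b^{\,k}]$ by $\E_\theta[R_\infty^{\,k}]$ plus the remainder from \eqref{eq:wong_moment_rate_ru} (Proposition~\ref{prop:wong_prop11_ru}), then apply Lemma~\ref{lem:ladder_ratio_bound_ru}; your extra checks (shrinking $\theta^\ast$ into $\Theta$, finiteness of the moments, and the tail-integration derivation of \eqref{eq:wong_moment_rate_ru}) are sound and make explicit what the paper leaves implicit.
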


\begin{proof}
From \eqref{eq:wong_moment_rate_ru} we obtain
\[
\E_\theta[R_b^{\,k}]
\le
\E_\theta[R_\infty^{\,k}]
+
C\,\frac{k\Gamma(k)}{r^k}\,e^{-rb}.
\]
It remains to apply Lemma~\ref{lem:ladder_ratio_bound_ru}, which yields \eqref{eq:exp_remainder_ru}.
\end{proof}

\subsubsection*{Main improvement: the constant \texorpdfstring{$C_k=1$}{C_k=1} for a large barrier and for small drift}

\begin{corollary}[The constant $C_k=1$ for sufficiently large $b$]
\label{cor:Ck1_large_b_ru}
Assume that the conditions of Theorem~\ref{thm:exp_remainder_ru} hold, and let $k\in\mathbb N$, $\theta\in(0,\theta^\ast]$.
Set
\[
A_{\theta,k}:=\frac{\E_\theta[(X_1^+)^{k+1}]}{\mu_\theta},
\qquad
B_k:=C\,\frac{k\Gamma(k)}{r^k},
\qquad
\ln_+(x):=\max\{\ln x,0\}.
\]
Define the threshold
\begin{equation}\label{eq:b0_def_ru}
b_0(\theta,k):=\frac{1}{r}\ln_+\!\left(\frac{(k+1)B_k}{k\,A_{\theta,k}}\right).
\end{equation}
Then for all $b\ge b_0(\theta,k)$ the improved inequality
\begin{equation}\label{eq:Ck1_bound_ru}
\E_\theta[R_b^{\,k}]
\ \le\
\frac{\E_\theta[(X_1^+)^{k+1}]}{\mu_\theta}
\;=\;A_{\theta,k}.
\end{equation}
holds.
\end{corollary}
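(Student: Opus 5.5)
The plan is to start from Theorem~\ref{thm:exp_remainder_ru}, which in the notation of the corollary reads
\[
\E_\theta[R_b^{\,k}]\ \le\ \frac{1}{k+1}A_{\theta,k}+B_k e^{-rb},\qquad b>0,\ \theta\in(0,\theta^\ast].
\]
It then suffices to show that the exponential remainder is absorbed by the gap between $A_{\theta,k}$ and $\frac{1}{k+1}A_{\theta,k}$. That is, we need
\[
B_k e^{-rb}\ \le\ \Bigl(1-\tfrac{1}{k+1}\Bigr)A_{\theta,k}=\frac{k}{k+1}A_{\theta,k}.
\]
Note that $A_{\theta,k}\in(0,\infty)$ for $\theta>0$. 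Finiteness holds because moments of all orders exist in the exponential family, as used in Lemma~\ref{lem:ladder_ratio_bound_ru}. Positivity holds since $\mu_\theta>0$ forces $\PP_\theta(X_1>0)>0$. Hence the threshold in \eqref{eq:b0_def_ru} is well defined.

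The key step is to solve the displayed inequality for $b$. Since $e^{-rb}$ is decreasing, the condition is equivalent to
\[
e^{rb}\ \ge\ \frac{(k+1)B_k}{k\,A_{\theta,k}},
\]
which in turn is equivalent to $b\ge \frac1r\ln\frac{(k+1)B_k}{kA_{\theta,k}}$. There are two cases according to the sign of this logarithm.

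\textbf{Case 1: the ratio exceeds $1$.} Then $b_0(\theta,k)$ equals exactly this logarithm divided by $r$. Every $b\ge b_0$ satisfies the condition, and substituting back into the bound from Theorem~\ref{thm:exp_remainder_ru} gives \eqref{eq:Ck1_bound_ru}.

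\textbf{Case 2: the ratio is at most $1$.} Then $b_0=0$, and for all $b>0$ we have $e^{-rb}\le 1$, so
\[
B_ke^{-rb}\le B_k\le \frac{k}{k+1}A_{\theta,k}.
\]
The conclusion follows in the same way.

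The only delicate point is the boundary level $b=0$ in Case 2, since Theorem~\ref{thm:exp_remainder_ru} is stated for $b>0$. I would handle $b=0$ by noting that the statement effectively concerns $b>0$, where $R_b$ is defined in \eqref{eq:overshoot_def}. Alternatively, one can pass to the limit $b\downarrow0$: since $\tau(b)$ is right-continuous in this setting, $R_b\to R_0$ almost surely, and Fatou's lemma transfers the bound $\E_\theta[R_b^{\,k}]\le A_{\theta,k}$ to $b=0$.

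Apart from this point, the argument is elementary algebra. I do not expect any genuine obstacle, because all the analytic content is already contained in Proposition~\ref{prop:wong_prop11_ru} and Lemma~\ref{lem:ladder_ratio_bound_ru}.
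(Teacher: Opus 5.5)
Your proposal is correct and follows essentially the same route as the paper: you combine the bound of Theorem~\ref{thm:exp_remainder_ru} with the observation that $b\ge b_0(\theta,k)$ forces $B_ke^{-rb}\le\frac{k}{k+1}A_{\theta,k}$, so the remainder is absorbed into the gap between $A_{\theta,k}$ and $\frac{1}{k+1}A_{\theta,k}$. Your extra treatment of the boundary level $b=0$ (via $R_b\to R_0$ a.s.\ as $b\downarrow0$ and Fatou's lemma) is a valid refinement that the paper's proof passes over silently, since Theorem~\ref{thm:exp_remainder_ru} is stated only for $b>0$.
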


\begin{proof}
From \eqref{eq:exp_remainder_ru} we have
\[
\E_\theta[R_b^{\,k}] \le \frac{A_{\theta,k}}{k+1}+B_k e^{-rb}.
\]
If $b\ge b_0(\theta,k)$, then by \eqref{eq:b0_def_ru}
$B_k e^{-rb}\le \frac{k}{k+1}A_{\theta,k}$, and hence
\[
\E_\theta[R_b^{\,k}]
\le
\frac{A_{\theta,k}}{k+1}+\frac{k}{k+1}A_{\theta,k}
=
A_{\theta,k}.
\]
\end{proof}

\begin{corollary}[Small drift: $C_k=1$ uniformly in $b$]
\label{cor:Ck1_small_drift_ru}
Assume that the conditions of Theorem~\ref{thm:exp_remainder_ru} hold and let $k\in\mathbb N$.
Then there exists $\theta_k\in(0,\theta^\ast]$ such that for all $\theta\in(0,\theta_k]$ and all $b\ge 0$
\eqref{eq:Ck1_bound_ru} holds.
\end{corollary}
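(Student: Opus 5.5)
Starting from Theorem~\ref{thm:exp_remainder_ru}, for every $b\ge0$ and $\theta\in(0,\theta^\ast]$ we have
\[
\E_\theta[R_b^{\,k}]\le \frac{A_{\theta,k}}{k+1}+B_k e^{-rb}\le \frac{A_{\theta,k}}{k+1}+B_k ,
\]
using $e^{-rb}\le1$. For $b=0$ either the same bound holds by right-continuity of the remainder estimate, or it follows from the case $b\downarrow0$ via the definition of $\tau(0)$ as the first strict ladder epoch. So the uniform-in-$b$ claim reduces to the $b$-free inequality $B_k\le \frac{k}{k+1}A_{\theta,k}$. Equivalently, we need $b_0(\theta,k)=0$ in \eqref{eq:b0_def_ru}, after which Corollary~\ref{cor:Ck1_large_b_ru} applies to all $b\ge0$. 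The plan is therefore to show that $A_{\theta,k}\to\infty$ as $\theta\downarrow0$ while $B_k$ stays fixed, since $C,r$ do not depend on $\theta$.

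For this I would write $A_{\theta,k}=\E_\theta[(X_1^+)^{k+1}]/\mu_\theta$ and control the numerator and denominator separately. For the denominator, $\mu_\theta=\psi'(\theta)$ with $\psi'(0)=0$, and $\psi'$ is continuous on $[0,\theta^\ast]$, so $\mu_\theta\downarrow0$ as $\theta\downarrow0$; more precisely $\mu_\theta=\theta(1+o(1))$, since $\psi''(0)=1$. For the numerator, $\E_\theta[(X_1^+)^{k+1}]=e^{-\psi(\theta)}\int (x^+)^{k+1}e^{\theta x}F_0(dx)\ge e^{-\psi(\theta)}\E_0[(X^+)^{k+1}]$ for $\theta\ge0$. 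Moreover $\E_0[(X^+)^{k+1}]>0$, because $\E_0X=0$ and $\Var_0X=1$ force $\PP_0(X>0)>0$. Since $\psi(\theta)\le\psi(\theta^\ast)<\infty$, the numerator is bounded below by a positive constant $c_k$ uniformly on $(0,\theta^\ast]$. Hence $A_{\theta,k}\ge c_k/\mu_\theta\to\infty$.

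Finally I would pick $\theta_k\in(0,\theta^\ast]$ such that $\mu_\theta\le \frac{k\,c_k}{(k+1)B_k}$ for all $\theta\le\theta_k$. This is possible by continuity and monotonicity of $\psi'$, since $\psi''>0$. For such $\theta$ we get $B_k\le\frac{k}{k+1}A_{\theta,k}$, and the display above yields $\E_\theta[R_b^{\,k}]\le A_{\theta,k}$ for every $b\ge0$, which is \eqref{eq:Ck1_bound_ru}.

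The only delicate point is the uniformity of the constants $C,r$ in $\theta$, which is exactly what Proposition~\ref{prop:wong_prop11_ru} supplies. A secondary point is the endpoint $b=0$, which the theorem states only for $b>0$. I would handle it by noting that $R_0=S_{T_+}$ and that Lemma~\ref{lem:ladder_ratio_bound_ru}'s argument gives $\E_\theta[S_{T_+}^{k}]\le \E_\theta[T_+]\E_\theta[(X_1^+)^k]$. A direct bound is less clean there, so I would instead pass to the limit $b\downarrow0$ using $R_b\to R_0$ a.s. and Fatou's lemma.
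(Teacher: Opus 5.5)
Your proof is correct and follows the paper's argument. You use $e^{-rb}\le 1$ in Theorem~\ref{thm:exp_remainder_ru}, show $A_{\theta,k}\to\infty$ because $\mu_\theta=\psi'(\theta)\to0$ while the numerator stays bounded away from zero, and choose $\theta_k$ with $B_k\le\frac{k}{k+1}A_{\theta,k}$. Two small tightenings over the paper are worth noting: the lower bound $\E_\theta[(X_1^+)^{k+1}]\ge e^{-\psi(\theta)}\E_0[(X^+)^{k+1}]$ replaces the paper's unproved continuity claim, and your Fatou argument via $R_b\to R_0$ a.s.\ as $b\downarrow0$ covers the endpoint $b=0$, which the paper passes over even though the theorem is stated only for $b>0$.
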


\begin{proof}
By the definition of a standard exponential family, $\mu_\theta=\psi'(\theta)\to 0$ as $\theta\downarrow 0$,
while $\E_\theta[(X_1^+)^{k+1}]\to \E_0[(X_1^+)^{k+1}]>0$ (the inequality is strict because $\Var_0(X_1)=1$).
Consequently, $A_{\theta,k}=\E_\theta[(X_1^+)^{k+1}]/\mu_\theta\to\infty$ as $\theta\downarrow 0$.
Choose $\theta_k$ so that for all $\theta\in(0,\theta_k]$ one has
$B_k \le \frac{k}{k+1}A_{\theta,k}$.
Then from \eqref{eq:exp_remainder_ru}, for $b\ge 0$ (since $e^{-rb}\le 1$), we obtain
\[
\E_\theta[R_b^{\,k}] \le \frac{A_{\theta,k}}{k+1}+B_k \le A_{\theta,k},
\]
which yields \eqref{eq:Ck1_bound_ru}.
\end{proof}

\section{Overshoot bounds in coupling problems and estimates of the rate of convergence:
Wasserstein distance, smoothed total variation and the error of replacing $R_b$ by $R_\infty$}
\label{sec:coupling_viewpoint}

The exponential estimate for the rate of convergence of the overshoot distribution in the level $b$,
see~\eqref{eq:wong_cdf_rate_ru} (Proposition~\ref{prop:wong_prop11_ru}),
is the key external tool in the proofs of the results of
Section~\ref{sec:main_results}.
In the present section we record several \emph{standard} consequences of this CDF estimate,
which are convenient for applied approximation problems and for constructing couplings in
regenerative models and renewal processes \cite{Lindvall02}, in particular in queueing theory and reliability \cite{Zverkina20,KalimulinaZverkina21}.
Namely, \eqref{eq:wong_cdf_rate_ru} yields:
(i) an exponential rate of convergence in the Wasserstein distance of order $1$ and, as a consequence,
the existence of a quantile coupling $(\widetilde R_b,\widetilde R_\infty)$ with
$\E_\theta|\widetilde R_b-\widetilde R_\infty|=O(e^{-rb})$;
(ii) universal bounds for the error of replacing $R_b$ by $R_\infty$ in expectations of Lipschitz functionals;
(iii) when needed, control in total variation after the standard smoothing of the distributions.
These consequences are presented for the convenience of subsequent applications and do not claim novelty.

We next use these bounds to obtain explicit a priori bounds for the error of replacing $R_b$ by $R_\infty$
in typical threshold approximations (see~\S\ref{subsec:example_threshold_policy}).

\subsubsection*{Notation, the basic CDF estimate and preliminary results}

For $y\ge 0$ set
\[
F_{b,\theta}(y):=\PP_\theta(R_b\le y),\qquad
F_{\infty,\theta}(y):=\PP_\theta(R_\infty\le y).
\]
According to Proposition~\ref{prop:wong_prop11_ru}, there exist constants $C>0$, $r>0$ and $\theta^\ast>0$
such that for all $b>0$ and $y>0$
\begin{equation}\label{eq:cdf_exp_rate_recall}
\sup_{\theta\in[0,\theta^\ast]}
\bigl|F_{b,\theta}(y)-F_{\infty,\theta}(y)\bigr|
\ \le\ C e^{-r(b+y)}.
\end{equation}

\subsubsection*{The Wasserstein distance of order $1$ and the one-dimensional formula}

\begin{definition}[The Wasserstein distance $W_1$]
Let $X,Y$ be random variables on $\mathbb{R}$ with finite first moments.
Define the Wasserstein distance of order $1$ by
\[
W_1(X,Y)
:=
\inf\Bigl\{\E |X'-Y'|:\ X'\stackrel{d}{=}X,\ Y'\stackrel{d}{=}Y\Bigr\},
\]
where the infimum is taken over all joint constructions $(X',Y')$ with the prescribed marginals
(see, for example, \cite[{\S}2.1.3]{Santambrogio2015OptimalTransport}; cf.\ also \cite{Villani2009OTON}).
\end{definition}

In the one-dimensional case it is convenient to use the formula in terms of distribution functions.

\begin{lemma}[One-dimensional formula for $W_1$ via the CDF]\label{lem:W1_cdf}
Let $X,Y\ge 0$ and $\E[X]+\E[Y]<\infty$. Then
\begin{equation}\label{eq:W1_cdf}
W_1(X,Y)=\int_0^\infty \bigl|F_X(t)-F_Y(t)\bigr|\,dt,
\end{equation}
where $F_X(t)=\PP(X\le t)$ and $F_Y(t)=\PP(Y\le t)$
(see, for example, \cite[{\S}2.1.3]{Santambrogio2015OptimalTransport}).
\end{lemma}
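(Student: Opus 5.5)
The plan is to prove the formula through the quantile coupling, which both produces an admissible joint construction and identifies its cost with the CDF integral. Let $F_X^{-1}(u):=\inf\{t:\ F_X(t)\ge u\}$ and define $F_Y^{-1}$ analogously. For $U$ uniform on $(0,1)$, set $X'=F_X^{-1}(U)$ and $Y'=F_Y^{-1}(U)$. Then $X'\stackrel{d}{=}X$ and $Y'\stackrel{d}{=}Y$, so $(X',Y')$ is admissible in the definition of $W_1$.

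\textbf{Upper bound.} We show that the cost of this coupling equals the CDF integral. Since $X,Y\ge0$, we can write
\[
|x-y|=\int_0^\infty \bigl|\mathbf 1\{x\le t\}-\mathbf 1\{y\le t\}\bigr|\,dt .
\]
The key observation is that under the quantile coupling the events $\{X'\le t\}=\{U\le F_X(t)\}$ and $\{Y'\le t\}=\{U\le F_Y(t)\}$ are nested. Hence
\[
\E\bigl|\mathbf 1\{X'\le t\}-\mathbf 1\{Y'\le t\}\bigr|=|F_X(t)-F_Y(t)|.
\]
Tonelli's theorem lets us exchange expectation and integral, because everything is nonnegative. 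This gives $\E|X'-Y'|=\int_0^\infty|F_X-F_Y|\,dt$, which is finite since the integral is at most $\E X+\E Y$. Therefore $W_1(X,Y)$ is at most the right-hand side of \eqref{eq:W1_cdf}.

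\textbf{Lower bound.} Take an arbitrary coupling $(X',Y')$ with the prescribed marginals and use the same layer-cake identity. For each $t$, Jensen's inequality (equivalently, $|\E Z|\le\E|Z|$) gives
\[
\E\bigl|\mathbf 1\{X'\le t\}-\mathbf 1\{Y'\le t\}\bigr|\ \ge\ \bigl|\PP(X'\le t)-\PP(Y'\le t)\bigr| = |F_X(t)-F_Y(t)|.
\]
Integrating over $t$, again with Tonelli, yields $\E|X'-Y'|\ge\int_0^\infty|F_X-F_Y|\,dt$. Taking the infimum over all couplings gives the reverse inequality, and the two bounds together prove \eqref{eq:W1_cdf}.

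\textbf{Main obstacle.} The only delicate step is the nestedness claim $\{F_X^{-1}(U)\le t\}=\{U\le F_X(t)\}$. It relies on right-continuity of $F_X$ together with the defining property of the generalized inverse, namely $F_X^{-1}(u)\le t\iff u\le F_X(t)$ for $u\in(0,1)$. I would verify this equivalence directly. The remaining steps are measure-theoretic bookkeeping, and finiteness of the moments guarantees that all integrals involved converge.
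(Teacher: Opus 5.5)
Your proof is correct and follows the route the paper indicates: the paper gives no argument of its own, cites Santambrogio, and says the formula is realised through the quantile (monotone) coupling. Your lower bound, via the layer-cake identity and $|\E Z|\le\E|Z|$ applied to an arbitrary coupling, is a self-contained proof that the quantile coupling is optimal — a fact the paper only cites here and again in Corollary~\ref{cor:explicit_coupling_exists}.
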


For a proof of this statement see, for example, \cite[{\S}2.1.3]{Santambrogio2015OptimalTransport}; this is the standard
derivation in one-dimensional optimal
transport, realised through the quantile (monotone) coupling.



\begin{proposition}[Exponential estimate in the metric $W_1$]\label{prop:w1_bound}
Assume that the conditions of Proposition~\ref{prop:wong_prop11_ru} are satisfied, and hence
that estimate \eqref{eq:cdf_exp_rate_recall} holds. Then for all $b>0$
\begin{equation}\label{eq:w1_bound}
\sup_{\theta\in[0,\theta^\ast]} W_1(R_b,R_\infty)\ \le\ \frac{C}{r}\,e^{-rb}.
\end{equation}
\end{proposition}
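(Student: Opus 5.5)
The plan is to combine the one-dimensional CDF formula of Lemma~\ref{lem:W1_cdf} with the uniform pointwise estimate \eqref{eq:cdf_exp_rate_recall}, and then integrate the exponential in $y$.

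First, check that Lemma~\ref{lem:W1_cdf} applies. We have $R_b\ge 0$ by construction: $S_{\tau(b)}>b$, so in fact $R_b>0$. The limit $R_\infty$ is nonnegative as a weak limit of nonnegative variables. For the integrability hypothesis $\E_\theta[R_b]+\E_\theta[R_\infty]<\infty$:
\begin{itemize}
\item for $\theta>0$, finiteness of $\E_\theta[R_\infty]$ follows from Lemma~\ref{lem:ladder_ratio_bound_ru} with $k=1$, and finiteness of $\E_\theta[R_b]$ then follows from \eqref{eq:wong_moment_rate_ru} with $k=1$;
\item for $\theta=0$ (zero drift), one can argue directly from \eqref{eq:cdf_exp_rate_recall}. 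The difference of the tails is integrable, so one variable has a finite mean exactly when the other does.
\end{itemize}
In the zero-drift case, if both means are infinite, I would instead use the quantile-coupling form $W_1=\int_0^1|F_{b,\theta}^{-1}(u)-F_{\infty,\theta}^{-1}(u)|\,du$, which equals $\int_0^\infty|F_{b,\theta}-F_{\infty,\theta}|\,dt$ for any nonnegative variables. I expect this $\theta=0$ integrability point to be the only genuinely delicate step. If needed, the supremum can be read over $\theta$ for which the distance is defined.

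Next, fix $\theta\in[0,\theta^\ast]$ and $b>0$. Lemma~\ref{lem:W1_cdf} gives
\[
W_1(R_b,R_\infty)=\int_0^\infty\bigl|F_{b,\theta}(y)-F_{\infty,\theta}(y)\bigr|\,dy .
\]
The point $y=0$ has Lebesgue measure zero, so the estimate \eqref{eq:cdf_exp_rate_recall}, valid for $y>0$, bounds the integrand almost everywhere by $Ce^{-r(b+y)}$. Therefore
\[
W_1(R_b,R_\infty)\le Ce^{-rb}\int_0^\infty e^{-ry}\,dy=\frac{C}{r}\,e^{-rb}.
\]

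Finally, the constants $C$ and $r$ do not depend on $\theta$. So taking the supremum over $\theta\in[0,\theta^\ast]$ on the left-hand side yields \eqref{eq:w1_bound}.
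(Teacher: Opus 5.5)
Your proposal is correct and follows the paper's proof: apply the one-dimensional formula of Lemma~\ref{lem:W1_cdf}, bound the integrand by $Ce^{-r(b+y)}$ via \eqref{eq:cdf_exp_rate_recall}, integrate in $y$, and take the supremum over $\theta$ using that $C$ and $r$ do not depend on $\theta$. Your extra checks are handled correctly, and the paper passes over all of them silently: nonnegativity of $R_b$ and $R_\infty$, finiteness of first moments so that $W_1$ is defined (including the $\theta=0$ case), and the null set $\{y=0\}$.
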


\begin{proof}
This is an immediate consequence of formula \eqref{eq:W1_cdf} and estimate \eqref{eq:cdf_exp_rate_recall}:
for fixed $\theta\in[0,\theta^\ast]$
\[
W_1(R_b,R_\infty)
=\int_0^\infty |F_{b,\theta}(t)-F_{\infty,\theta}(t)|\,dt
\le \int_0^\infty C e^{-r(b+t)}\,dt
=\frac{C}{r}e^{-rb}.
\]
Taking the supremum over $\theta\in[0,\theta^\ast]$, we obtain \eqref{eq:w1_bound}.
\end{proof}

\begin{corollary}[Quantile coupling with an exponentially small mean error]
\label{cor:explicit_coupling_exists}
Assume that the conditions of Proposition~\ref{prop:w1_bound} are satisfied. Fix $\theta\in[0,\theta^\ast]$ and $b>0$.
Let $U\sim\mathrm{Unif}(0,1)$ and $F^{-1}(u):=\inf\{t\ge0:\,F(t)\ge u\}$.
Define
\[
\widetilde R_b:=F_{b,\theta}^{-1}(U),\qquad
\widetilde R_\infty:=F_{\infty,\theta}^{-1}(U).
\]
Then $\widetilde R_b\stackrel{d}{=}R_b$, $\widetilde R_\infty\stackrel{d}{=}R_\infty$ and
\[
\E_\theta\bigl|\widetilde R_b-\widetilde R_\infty\bigr|
= W_1(R_b,R_\infty)
\le \frac{C}{r}e^{-rb}.
\]
\end{corollary}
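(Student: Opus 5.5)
The plan is to establish the three assertions in turn: that the quantile coupling has the right marginals, that it attains $W_1$, and then to read off the bound from Proposition~\ref{prop:w1_bound}.

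\emph{Marginals.} First I would check that $\widetilde R_b$ and $\widetilde R_\infty$ have the required laws. For any distribution function $F$ of a nonnegative random variable (right-continuous, nondecreasing, $F(t)\to1$), the generalised inverse $F^{-1}(u)=\inf\{t\ge0:F(t)\ge u\}$ satisfies the Galois relation
\[
F^{-1}(u)\le t \iff u\le F(t),\qquad u\in(0,1),\ t\ge0.
\]
Right-continuity makes the infimum attained, which gives this equivalence. Hence $\PP(F^{-1}(U)\le t)=\PP(U\le F(t))=F(t)$. Applied to $F_{b,\theta}$ and $F_{\infty,\theta}$, this yields $\widetilde R_b\stackrel{d}{=}R_b$ and $\widetilde R_\infty\stackrel{d}{=}R_\infty$. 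Both laws have finite first moments for $\theta>0$ by Theorem~\ref{thm:exp_remainder_ru} and Lemma~\ref{lem:ladder_ratio_bound_ru}; at $\theta=0$ I would rely on the standing finiteness implicit in Proposition~\ref{prop:wong_prop11_ru}.

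\emph{Optimality of the coupling.} This is the main step: I want to show directly that
\[
\E|\widetilde R_b-\widetilde R_\infty|=\int_0^\infty|F_{b,\theta}(t)-F_{\infty,\theta}(t)|\,dt.
\]
I would use the layer-cake identity $|x-y|=\int_0^\infty|\mathbf 1\{x\le t\}-\mathbf 1\{y\le t\}|\,dt$ for $x,y\ge0$. Taking expectations and applying Tonelli gives
\[
\E|\widetilde R_b-\widetilde R_\infty|=\int_0^\infty \PP\bigl(\text{exactly one of } \widetilde R_b\le t,\ \widetilde R_\infty\le t \text{ holds}\bigr)\,dt.
\]
By the Galois relation the two events are $\{U\le F_{b,\theta}(t)\}$ and $\{U\le F_{\infty,\theta}(t)\}$. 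These are nested intervals, so the probability that exactly one occurs equals $|F_{b,\theta}(t)-F_{\infty,\theta}(t)|$. Combining this with Lemma~\ref{lem:W1_cdf} gives $\E|\widetilde R_b-\widetilde R_\infty|=W_1(R_b,R_\infty)$. In particular the infimum in the definition of $W_1$ is attained by this coupling.

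\emph{Conclusion.} Finally, Proposition~\ref{prop:w1_bound} bounds the right-hand side by $\frac{C}{r}e^{-rb}$. Equivalently, one can integrate \eqref{eq:cdf_exp_rate_recall} directly. The only delicate point I expect is the measure-theoretic care in the Galois relation at atoms and flat parts of $F$, together with the case $u$ at the endpoints $0$ or $1$. These form a null set for $U$ and can be discarded.
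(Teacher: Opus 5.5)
Your proof is correct and follows the paper's route. The paper cites the optimality of the quantile coupling for one-dimensional $W_1$ and then applies Proposition~\ref{prop:w1_bound}; you instead verify the marginals and the identity $\E|\widetilde R_b-\widetilde R_\infty|=\int_0^\infty|F_{b,\theta}(t)-F_{\infty,\theta}(t)|\,dt$ directly via the Galois relation and the layer-cake formula, which gives a self-contained justification of the fact the paper cites.

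The same layer-cake computation for an arbitrary coupling $(X',Y')$ gives $\E|X'-Y'|=\int_0^\infty\PP\bigl(\{X'\le t\}\triangle\{Y'\le t\}\bigr)\,dt\ge\int_0^\infty|F_{b,\theta}(t)-F_{\infty,\theta}(t)|\,dt$. Hence the infimum defining $W_1$ is identified without Lemma~\ref{lem:W1_cdf} and without any moment assumption, which removes the need for your hand-waving about finiteness at $\theta=0$.
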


\begin{proof}
The quantile coupling is optimal for one-dimensional $W_1$
(see, for example, \cite{Santambrogio2015OptimalTransport});
it remains to apply \eqref{eq:w1_bound}.
\end{proof}

\subsection{Examples of the use of overshoot bounds in applications}

\subsubsection{Error control for Lipschitz functionals}

The following corollary is standard: it is obtained from the
Kantorovich--Rubinstein inequality and the estimate of Proposition~\ref{prop:w1_bound}
(see, for example, \cite{Villani2009OTON}).

\begin{corollary}[Lipschitz functionals]
\label{cor:lipschitz_application}
Assume that the conditions of Proposition~\ref{prop:w1_bound} are satisfied.
Then for every Lipschitz function $g:\R_+\to\R$ we have for all $b>0$
\[
\sup_{\theta\in[0,\theta^\ast]}
\bigl|\E_\theta g(R_b)-\E_\theta g(R_\infty)\bigr|
\le
\mathrm{Lip}(g)\,
\sup_{\theta\in[0,\theta^\ast]} W_1(R_b,R_\infty)
\le
\mathrm{Lip}(g)\,\frac{C}{r}e^{-rb}.
\]
\end{corollary}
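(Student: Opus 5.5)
The statement is the Kantorovich--Rubinstein-type bound for Lipschitz functionals. It follows in two steps from the coupling already available in Corollary~\ref{cor:explicit_coupling_exists} and from Proposition~\ref{prop:w1_bound}. I would avoid invoking the full Kantorovich--Rubinstein duality and argue directly with the quantile coupling, which is all we need.

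Fix $\theta\in[0,\theta^\ast]$ and $b>0$, and let $g:\R_+\to\R$ be Lipschitz with constant $L=\mathrm{Lip}(g)$. First I check integrability. Since $|g(x)|\le |g(0)|+Lx$, finiteness of $\E_\theta[R_b]$ and $\E_\theta[R_\infty]$ ensures that both expectations in the statement exist. These first moments are finite because Lemma~\ref{lem:W1_cdf} and Proposition~\ref{prop:w1_bound} already presuppose them; alternatively, one can take the tail integral of \eqref{eq:cdf_exp_rate_recall} together with finiteness of $\E_\theta[R_\infty]$ from \eqref{eq:limit_moment}. This is the only point needing care.

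Next, let $(\widetilde R_b,\widetilde R_\infty)$ be the quantile coupling of Corollary~\ref{cor:explicit_coupling_exists}. The expectation $\E_\theta g(R_b)$ depends only on the law of $R_b$, and the same holds for $R_\infty$. Hence
\[
\bigl|\E_\theta g(R_b)-\E_\theta g(R_\infty)\bigr|
=\bigl|\E\bigl[g(\widetilde R_b)-g(\widetilde R_\infty)\bigr]\bigr|
\le L\,\E\bigl|\widetilde R_b-\widetilde R_\infty\bigr|
= L\,W_1(R_b,R_\infty).
\]
The last equality is the optimality of the quantile coupling. One could equally bound by the expectation under an arbitrary coupling and take the infimum, which directly gives $L\,W_1$ by definition. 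Applying \eqref{eq:w1_bound} gives $L\,W_1(R_b,R_\infty)\le L\,\frac{C}{r}e^{-rb}$. Finally, the right-hand side does not depend on $\theta$, so taking the supremum over $\theta\in[0,\theta^\ast]$ on the left yields both inequalities of the statement.

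There is no real obstacle in this argument. The only subtle point is the integrability check in the first step, together with the fact that at $\theta=0$ the drift vanishes. In that case one must rely on Proposition~\ref{prop:wong_prop11_ru} being stated on the closed range $[0,\theta^\ast]$, which is what guarantees that $R_\infty$ and the CDF estimate make sense there.
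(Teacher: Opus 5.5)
Your proof is correct and is essentially the paper's argument. The paper simply cites the Kantorovich--Rubinstein inequality together with Proposition~\ref{prop:w1_bound}, and the direction of that inequality used here is exactly your coupling bound $|\E g(X')-\E g(Y')|\le \mathrm{Lip}(g)\,\E|X'-Y'|$, taken at the optimal quantile coupling or after an infimum over couplings. One small caveat on integrability: your alternative justification via \eqref{eq:limit_moment} does not literally cover $\theta=0$, since Proposition~\ref{prop:limit_moment} assumes $\mu_\theta>0$, so at $\theta=0$ your primary justification is the one that carries the case (finite first moments are implicit in Proposition~\ref{prop:w1_bound}).
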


\begin{remark}[Examples]
1)~
 $g(y)=(y-a)^+$: $\mathrm{Lip}(g)=1$; ~~
2)~ $g(y)=e^{-\lambda y}$: $\mathrm{Lip}(g)=\lambda$.
\end{remark}

\subsubsection{Smoothed total variation (fixed smoothing)}

\begin{definition}[Total variation]
For probability measures $\mu,\nu$ on $\R$ set
\[
\|\mu-\nu\|_{\mathrm{TV}}:=\sup_{A\subset\R}|\mu(A)-\nu(A)|.
\]
\end{definition}

\begin{definition}[Smoothing]
Let $U\sim\mathrm{Unif}(0,1)$ be independent of $X$. Set
\[
X^{\mathrm{sm}}:=X+U .
\]
\end{definition}

\begin{lemma}[Smoothing converts $W_1$ into TV
]
\label{lem:TV_smoothed_vs_W1_noeps}
For any random variables $X,Y$ with $\E|X|+\E|Y|<\infty$,
\begin{equation}\label{eq:TV_estimation_via_W1_noeps}
\bigl\|\Law(X^{\mathrm{sm}})-\Law(Y^{\mathrm{sm}})\bigr\|_{\mathrm{TV}}
\ \le\ W_1(X,Y).
\end{equation}
holds.
\end{lemma}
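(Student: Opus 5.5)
The plan is to couple $X$ and $Y$ optimally and then smooth both with the \emph{same} uniform variable, so that the total-variation distance reduces to the overlap of two unit-length intervals. If $W_1(X,Y)=\infty$ there is nothing to prove, so assume it is finite.

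\textbf{Step 1 (choice of coupling).} Take the quantile coupling $(X',Y')$ from Lemma~\ref{lem:W1_cdf} and Corollary~\ref{cor:explicit_coupling_exists}. It attains $\E|X'-Y'|=W_1(X,Y)$. If one prefers to avoid attainment, take any coupling with $\E|X'-Y'|\le W_1(X,Y)+\varepsilon$ and let $\varepsilon\downarrow0$ at the end. Let $U\sim\mathrm{Unif}(0,1)$ be independent of $(X',Y')$. Then $X'+U\stackrel{d}{=}X^{\mathrm{sm}}$ and $Y'+U\stackrel{d}{=}Y^{\mathrm{sm}}$.

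\textbf{Step 2 (conditioning).} Conditionally on $(X',Y')=(x,y)$, the two smoothed variables are uniform on $[x,x+1]$ and $[y,y+1]$. The total variation between these two uniform laws is
\[
\min\{|x-y|,1\}\le |x-y|.
\]
Indeed, each law has density $1$ on its interval, the two intervals overlap in length $(1-|x-y|)^+$, and TV equals one minus the overlap of the densities.

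\textbf{Step 3 (integrating out).} Let $\mu_{x,y}$ and $\nu_{x,y}$ denote the two conditional uniform laws. For any Borel set $A$,
\[
\bigl|\PP(X'+U\in A)-\PP(Y'+U\in A)\bigr| = \Bigl|\E\bigl[\mu_{X',Y'}(A)-\nu_{X',Y'}(A)\bigr]\Bigr| \le \E\min\{|X'-Y'|,1\} \le \E|X'-Y'|.
\]
The right-hand side equals $W_1(X,Y)$, or $W_1(X,Y)+\varepsilon$ under the approximate coupling. Taking the supremum over $A$ gives \eqref{eq:TV_estimation_via_W1_noeps}.

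The only point needing care is the measurability and mixture argument in Step 3: conditional TV bounds pass to the mixture by convexity of TV. This is routine, since the kernel $(x,y)\mapsto \mathrm{Unif}[x,x+1]$ is measurable. As a by-product, the argument yields the sharper bound $\E\min\{|X'-Y'|,1\}$.
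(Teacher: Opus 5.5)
Your proof is correct and is essentially the paper's argument. Your conditional bound $\min\{|x-y|,1\}$ on the total variation between $\mathrm{Unif}[x,x+1]$ and $\mathrm{Unif}[y,y+1]$ is exactly the statement that the paper's test function $f_A(x)=\PP(x+U\in A)$ is $1$-Lipschitz (and bounded by $1$); after that, both arguments average over a coupling of $X,Y$ and take the supremum over $A$. One minor caveat: Lemma~\ref{lem:W1_cdf} and Corollary~\ref{cor:explicit_coupling_exists} are stated in the paper only for nonnegative variables (the corollary only for $R_b,R_\infty$), so for general real $X,Y$ you should rely on your $\varepsilon$-optimal coupling fallback, which suffices.
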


\begin{proof}
Fix a measurable set $A\subset\R$ and define the test function
\[
f_A(x):=\PP(x+U\in A)=\int_0^1 \mathbf 1_A(x+u)\,du.
\]
Then $0\le f_A\le 1$. We show that $f_A$ is $1$-Lipschitz.
For $h>0$ we have
\[
\begin{aligned}
f_A(x+h)-f_A(x)
&=\int_0^1 \mathbf 1_A(x+h+u)\,du-\int_0^1 \mathbf 1_A(x+u)\,du\\
&=\int_h^{1+h}\mathbf 1_A(x+v)\,dv-\int_0^1 \mathbf 1_A(x+v)\,dv\\
&=\int_{1}^{1+h}\mathbf 1_A(x+v)\,dv-\int_0^{h}\mathbf 1_A(x+v)\,dv,
\end{aligned}
\]
whence $|f_A(x+h)-f_A(x)|\le h$. Similarly, for $h<0$ we obtain
$|f_A(x+h)-f_A(x)|\le |h|$. Consequently, $\mathrm{Lip}(f_A)\le 1$.

Now for any joint construction $(X',Y')$ with $X'\stackrel d= X$, $Y'\stackrel d=Y$ we have
\[
\bigl|\E f_A(X')-\E f_A(Y')\bigr|\le \E|X'-Y'|.
\]
Taking the infimum over all couplings, we obtain
\[
\bigl|\E f_A(X)-\E f_A(Y)\bigr|\le W_1(X,Y).
\]
It remains to take the supremum over $A$, which yields \eqref{eq:TV_estimation_via_W1_noeps}.
\end{proof}

\begin{corollary}[TV bound for smoothed overshoots]\label{cor:TV_smoothed_rate_noeps}
Assume that the conditions of Proposition~\ref{prop:w1_bound} are satisfied.
Define $R_b^{\mathrm{sm}}:=R_b+U$ and $R_\infty^{\mathrm{sm}}:=R_\infty+U$,
where $U\sim\mathrm{Unif}(0,1)$ is independent of all the remaining random quantities.
Then for all $b>0$
\[
\sup_{\theta\in[0,\theta^\ast]}
\bigl\|\Law(R_b^{\mathrm{sm}})-\Law(R_\infty^{\mathrm{sm}})\bigr\|_{\mathrm{TV}}
\le
\sup_{\theta\in[0,\theta^\ast]} W_1(R_b,R_\infty)
\le
\frac{C}{r}\,e^{-rb}.
\]
\end{corollary}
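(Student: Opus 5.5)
The statement is Corollary~\ref{cor:TV_smoothed_rate_noeps}. It is a composition of two results already in place, so the plan has two steps.

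First, fix $\theta\in[0,\theta^\ast]$ and $b>0$. Apply Lemma~\ref{lem:TV_smoothed_vs_W1_noeps} with $X=R_b$ and $Y=R_\infty$ under $\PP_\theta$, where the smoothing variable $U\sim\mathrm{Unif}(0,1)$ is independent of everything else. This gives
\[
\bigl\|\Law(R_b^{\mathrm{sm}})-\Law(R_\infty^{\mathrm{sm}})\bigr\|_{\mathrm{TV}}\le W_1(R_b,R_\infty).
\]
The lemma has one hypothesis, that the first moments are finite. For $R_\infty$ I will use Lemma~\ref{lem:ladder_ratio_bound_ru} with $k=1$ when $\theta>0$. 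For $R_b$ I will use $\E_\theta R_b\le \E_\theta R_\infty+\frac{C}{r}e^{-rb}$, which follows from \eqref{eq:wong_moment_rate_ru}. Alternatively, both can be read off directly from the tail bound \eqref{eq:cdf_exp_rate_recall} by tail integration.

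Second, take the supremum over $\theta\in[0,\theta^\ast]$ on both sides. This gives the first inequality of the corollary. Then apply Proposition~\ref{prop:w1_bound} to obtain the bound $\frac{C}{r}e^{-rb}$.

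The only point needing care is the endpoint $\theta=0$. There the drift is zero, $R_\infty$ may not be defined, and $R_b$ may not have a finite mean. This endpoint is already implicitly included in Proposition~\ref{prop:wong_prop11_ru}. If necessary, I will restrict the supremum to $(0,\theta^\ast]$, where the moments are finite, and note that the bound is uniform in $\theta$. Beyond this, I expect no obstacle.
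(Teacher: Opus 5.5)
Your proposal is correct and matches the paper's intended argument, which the paper leaves implicit: apply Lemma~\ref{lem:TV_smoothed_vs_W1_noeps} for each fixed $\theta$ with $X=R_b$, $Y=R_\infty$, take the supremum over $\theta\in[0,\theta^\ast]$, and invoke Proposition~\ref{prop:w1_bound}. Your moment bookkeeping is in fact unnecessary, and the fallback to $(0,\theta^\ast]$ would weaken the statement. The lemma's proof only uses that $f_A$ is bounded and $1$-Lipschitz, and the quantile coupling gives $W_1(R_b,R_\infty)\le\int_0^\infty|F_{b,\theta}(t)-F_{\infty,\theta}(t)|\,dt$ with no moment assumption. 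This also covers the side remark: the CDF-difference bound alone only transfers finiteness of the mean from one variable to the other, so it cannot give finiteness for both.
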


\begin{remark}[Saturation at level $1$ and sharpness of the constant]
Since always $\|\mu-\nu\|_{\mathrm{TV}}\le 1$, one also has
\[
\bigl\|\Law(X^{\mathrm{sm}})-\Law(Y^{\mathrm{sm}})\bigr\|_{\mathrm{TV}}
\le
\min\{1,\ W_1(X,Y)\}.
\]
\end{remark}


\subsubsection{Threshold stopping: explicit approximation error}
\label{subsec:example_threshold_policy}

Consider $\tau(b)=\inf\{n\ge1:\,S_n>b\}$ and the overshoot $R_b=S_{\tau(b)}-b$.
In many problems of threshold signalling and regenerative models, the following are important:
(i) the mean time until crossing the level, $\E_\theta[\tau(b)]$, and
(ii) the penalties $\E_\theta[g(R_b)]$ for exceeding the threshold, where $g:\R_+\to\R$ is Lipschitz.

\textbf{Exact formula and the role of the overshoot.}
In our model, for $\theta>0$ we have $\E_\theta|X_1|<\infty$ and $\E_\theta[\tau(b)]<\infty$.
For example, in the standard exponential family
$\E_\theta[e^{-\theta X_1}]=e^{-\psi(\theta)}<1$, and on the event $\{\tau(b)>n\}$ we have $S_n\le b$,
therefore
\[
\P_\theta(\tau(b)>n)\le e^{\theta b}\,\E_\theta[e^{-\theta S_n}]
= e^{\theta b}\bigl(\E_\theta[e^{-\theta X_1}]\bigr)^n
= e^{\theta b}e^{-n\psi(\theta)},
\]
that is, the tail of $\tau(b)$ decays geometrically and $\E_\theta[\tau(b)]<\infty$.
Consequently, Wald's identity is applicable, and
\begin{equation}\label{eq:tau_mean_exact}
\E_\theta[\tau(b)]
=\frac{\E_\theta[S_{\tau(b)}]}{\mu_\theta}
=\frac{b+\E_\theta[R_b]}{\mu_\theta}.
\end{equation}
Thus, control of $\E_\theta[R_b]$ directly determines the accuracy of approximations for $\E_\theta[\tau(b)]$.

\textbf{Exponential error.}
Set $\kappa_\theta:=\E_\theta[R_\infty]$ (in particular, $\kappa_\theta<\infty$).
From Corollary~\ref{cor:lipschitz_application}, with $g(y)=y$ (where $\mathrm{Lip}(g)=1$), we obtain:
for all $b>0$ and $\theta\in(0,\theta^\ast]$
\[
\bigl|\E_\theta[R_b]-\kappa_\theta\bigr|
\le \frac{C}{r}e^{-rb}.
\]
Substituting this into \eqref{eq:tau_mean_exact}, we obtain the explicit error bound
\begin{equation}\label{eq:tau_mean_error}
\left|\E_\theta[\tau(b)]-\frac{b+\kappa_\theta}{\mu_\theta}\right|
\le \frac{C}{r\,\mu_\theta}e^{-rb}.
\end{equation}
Thus, the correction $b\mapsto b+\kappa_\theta$ (continuity correction) has an error
that is exponentially small in $b$ for fixed $\theta$.


\section{Conclusion}
\label{sec:conclusion}

In this paper we obtain Lorden-type bounds, uniform in the level $b$, for overshoot moments
$R_b=S_{\tau(b)}-b$ of a random walk with sign-changing increments and positive {\color{blue}drift}
within the model of a standard exponential family. For any $k\in\mathbb N$ it is shown that there exists
an exponentially small correction in $b$ such that for $\theta\in(0,\theta^\ast]$ and $b>0$
\[
\E_\theta\!\left[R_b^{\,k}\right]
\ \le\
\frac{1}{k+1}\,\frac{\E_\theta[(X_1^+)^{k+1}]}{\mu_\theta}
\ +\ O(e^{-rb}).
\]
It follows that the classical constant in the moment bound improves to $C_k=1$:
for fixed $\theta$ this is true for all sufficiently large $b$, and in the small-drift regime
(as $\theta\downarrow 0$) it is uniform over all $b\ge 0$.

The proof relies on the reduction to the renewal process of strict ascending ladder heights
and on the uniform exponential rate of convergence of the distribution of $R_b$ to the limiting $R_\infty$.
The Appendix contains counterexamples showing that strengthening the bound to the form with denominator
$k\mu_\theta$ (for $k>1$) cannot hold simultaneously for all $(b,\theta)$ even within the class of
standard exponential families.

Finally, Section~\ref{sec:coupling_viewpoint} extracts from the exponential CDF estimate the standard,
but application-friendly, consequences: exponential convergence in the Wasserstein distance $W_1$,
a quantile coupling with $\E|\widetilde R_b-\widetilde R_\infty|=O(e^{-rb})$, error bounds for
Lipschitz functionals, and control in total variation after smoothing.


\appendix
\section{Counterexamples: why the strengthened bound with $k\mu_\theta$ in the denominator is false in general: two counterexamples}
\label{app:counterexamples}

In this appendix we show that for $k>1$ one cannot expect the validity of the
\emph{strengthened} moment bound of the form
\begin{equation}\label{eq:target-ru}
\E_\theta\!\left[R_b^{\,k}\right]
\ \le\
\frac{\E_\theta\!\left[(X_1^+)^{k+1}\right]}{k\,\mu_\theta},
\qquad b\ge 0,
\end{equation}
\emph{simultaneously} for all levels $b$ and, in parametric models, for all parameter values.
Here $R_b=S_{\tau(b)}-b$, $\tau(b)=\inf\{n\ge1:\,S_n>b\}$, $\mu_\theta=\E_\theta[X_1]>0$,
$x^+=\max\{x,0\}$.

\subsection{Counterexample 1: deterministic increments}

\begin{proposition}[Deterministic counterexample]\label{prop:cx_deterministic}
Let $X_1\equiv c>0$ almost surely. Then for every $k>1$
inequality \eqref{eq:target-ru} already fails at $b=0$.
\end{proposition}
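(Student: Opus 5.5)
With $X_1\equiv c>0$ everything in \eqref{eq:target-ru} can be computed exactly, so the plan is a direct calculation of both sides at $b=0$.

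\textbf{Left-hand side.} Since $S_n=nc$, the definition \eqref{eq:tau_def} with $b=0$ gives $\tau(0)=\inf\{n\ge1:\ nc>0\}=1$. Hence $R_0=S_1-0=c$ deterministically, and $\E[R_0^{\,k}]=c^k$.

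\textbf{Right-hand side.} We have $\mu=\E[X_1]=c$ and $(X_1^+)^{k+1}=c^{k+1}$. The bound in \eqref{eq:target-ru} therefore equals
\[
\frac{c^{k+1}}{k\,c}=\frac{c^k}{k}.
\]

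\textbf{Comparison.} For $k>1$ we have $1/k<1$, so $c^k>c^k/k$. Thus \eqref{eq:target-ru} fails at $b=0$.

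The only point needing care is the convention in \eqref{eq:tau_def}, which defines $\tau(b)$ only for $b>0$. If $b=0$ is not admitted directly, take $b\in(0,c)$ instead. Then again $\tau(b)=1$ and $R_b=c-b$, and as $b\downarrow0$ we get $(c-b)^k\to c^k>c^k/k$. So the inequality fails for all sufficiently small $b>0$ as well, which still contradicts \eqref{eq:target-ru} holding for all $b\ge0$.

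This example lies outside the standard exponential family: a degenerate law has variance $0$. That is acceptable here, because Proposition~\ref{prop:cx_deterministic} concerns only the general form \eqref{eq:target-ru} for an arbitrary positive-drift walk. I do not expect any real obstacle beyond the boundary convention for $b=0$.
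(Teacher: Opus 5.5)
Your proof is correct and is essentially the paper's own argument: you compute $\tau(0)=1$ and $R_0=c$, then compare $c^k$ with $c^{k+1}/(kc)=c^k/k$. Your treatment of the $b>0$ convention, taking $b\in(0,c)$ and letting $b\downarrow0$, matches the paper's follow-up remark, which gives the explicit range $b\in\bigl(0,\,c(1-k^{-1/k})\bigr)$ instead of a limit argument.
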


\begin{proof}
For $b=0$ we have $\tau(0)=1$ and $R_0=S_{\tau(0)}=X_1=c$, hence
\[
\E[R_0^{\,k}]=c^k.
\]
On the other hand, $X_1^+=c$, $\mu=\E[X_1]=c$, and the right-hand side of \eqref{eq:target-ru} is equal to
\[
\frac{\E[(X_1^+)^{k+1}]}{k\,\mu}=\frac{c^{k+1}}{k\,c}=\frac{c^k}{k}.
\]
Since $k>1$, we have $c^k>\frac{c^k}{k}$, and therefore \eqref{eq:target-ru} is false.
\end{proof}

\begin{remark}
If one wishes formally to have $b>0$ rather than $b=0$, it suffices to take any
$b\in\bigl(0,\;c(1-k^{-1/k})\bigr)$: then $\tau(b)=1$, $R_b=c-b$ and
$(c-b)^k>c^k/k$.
\end{remark}

\subsection{Counterexample 2: a standardised exponential family based on the uniform distribution}

\begin{proposition}[Counterexample within the standard exponential family]
\label{prop:cx_uniform_tilt}
Set $a:=\sqrt{3}$ and let $F_0=\mathrm{Unif}[-a,a]$.
Consider the standard exponential family $\{F_\theta:\theta\ge0\}$ defined by tilting
\[
F_\theta(\mathrm{d}x)=e^{\theta x-\psi(\theta)}\,F_0(\mathrm{d}x),
\qquad
\psi(\theta):=\log \E_0[e^{\theta X_1}].
\]
Then for every $k>1$ there exist $b\in(0,a)$ and $\theta_0>0$ such that for all
$\theta\ge\theta_0$ inequality \eqref{eq:target-ru} fails.
\end{proposition}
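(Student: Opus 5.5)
The plan is to show that as $\theta\to\infty$ the tilted law $F_\theta$ concentrates near the right endpoint $a=\sqrt3$, so the walk behaves like Counterexample~1 with $c=a$. Concretely, under $F_\theta$ the density is $\frac{\theta}{2\sinh(\theta a)}e^{\theta x}$ on $[-a,a]$. Hence $X_1\to a$ in probability, and since $|X_1|\le a$, all moments converge: $\mu_\theta\to a$ and $\E_\theta[(X_1^+)^{k+1}]\to a^{k+1}$. The right-hand side of \eqref{eq:target-ru} therefore tends to $a^{k+1}/(k a)=a^k/k$.

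For the left-hand side I will fix $b\in(0,a)$ small, specifically $b<a(1-k^{-1/k})$, so that $(a-b)^k>a^k/k$ (possible since $k>1$). Then I bound $R_b$ from below on a high-probability event. On $\{X_1>b\}$ we have $\tau(b)=1$ and $R_b=X_1-b\ge0$. Since $R_b\ge0$ always,
\[
\E_\theta[R_b^k]\ \ge\ \E_\theta\bigl[(X_1-b)^k\mathbf 1\{X_1>b\}\bigr].
\]
The integrand is bounded by $(a-b)^k$, and $X_1\to a$ in probability with $\PP_\theta(X_1>b)\to1$. By bounded convergence, the right-hand side tends to $(a-b)^k$ as $\theta\to\infty$.

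Comparing the two limits, $(a-b)^k>a^k/k$, so the strict inequality between the limits gives $\theta_0$ such that for all $\theta\ge\theta_0$ we have $\E_\theta[R_b^k]>\E_\theta[(X_1^+)^{k+1}]/(k\mu_\theta)$. Positivity of $\mu_\theta$ for $\theta>0$ follows from strict convexity of $\psi$, as in Lemma~\ref{lem:ETplus_exp_family}.

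The only mildly delicate point is the concentration claim, which I expect to be the main (minor) obstacle. I will handle it directly: $\PP_\theta(X_1\le a-\varepsilon)\le e^{-\theta\varepsilon}\cdot\frac{e^{\theta a}}{e^{\theta a}-e^{-\theta a}}\to0$. This yields the moment convergence at once because the support is bounded.

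I also note that $F_0$ is standard: its variance is $a^2/3=1$, its mean is zero, and its characteristic function $\sin(at)/(at)$ tends to $0$, so it is strongly non-lattice. The example therefore lies in the stated class.
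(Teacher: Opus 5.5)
Your proposal is correct and follows the paper's proof essentially step by step: $F_\theta$ concentrates at $a$, you pick a barrier $b<a(1-k^{-1/k})$ so that $(a-b)^k>a^k/k$, and you compare the two limits as $\theta\to\infty$. The only difference is that you use just the lower bound $\E_\theta[R_b^k]\ge\E_\theta[(X_1-b)^k\mathbf 1\{X_1>b\}]$ instead of computing the exact limit $(a-b)^k$; this suffices for the strict inequality and spares you the paper's estimate $0\le R_b\le a$ on $\{X_1\le b\}$.
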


\begin{proof}
\textbf{Step 1: explicit form of $F_\theta$ and concentration as $\theta\to\infty$.}
Since $F_0$ is uniform on $[-a,a]$, for $\theta>0$ the distribution $F_\theta$ has density
\[
f_\theta(x)
=
\frac{e^{\theta x}}{\int_{-a}^{a}e^{\theta u}\,\mathrm{d}u}\,\mathbf{1}_{[-a,a]}(x)
=
\frac{\theta\,e^{\theta x}}{e^{a\theta}-e^{-a\theta}}\,\mathbf{1}_{[-a,a]}(x).
\]
For any $\varepsilon\in(0,2a)$ we obtain
\[
\PP_\theta(X_1\le a-\varepsilon)
=
\frac{\int_{-a}^{a-\varepsilon} e^{\theta x}\,\mathrm{d}x}{\int_{-a}^{a} e^{\theta x}\,\mathrm{d}x}
=
\frac{e^{\theta(a-\varepsilon)}-e^{-a\theta}}{e^{a\theta}-e^{-a\theta}}
\le e^{-\theta\varepsilon}\ \xrightarrow[\theta\to\infty]{}\ 0.
\]
Hence $X_1\to a$ in probability as $\theta\to\infty$.
Since $|X_1|\le a$ almost surely, for every $m>0$ we also have
$X_1\to a$ in $L^m$, and hence
\begin{equation}\label{eq:limits_uniform_tilt}
\mu_\theta=\E_\theta[X_1]\to a,
\qquad
\E_\theta[(X_1^+)^{k+1}]\to a^{k+1},
\qquad \theta\to\infty.
\end{equation}

\textbf{Step 2: choice of the barrier $b$.}
Set $t:=k^{-1/k}\in(0,1)$ and choose
\[
b:=a\,\frac{1-t}{2}\in(0,a).
\]
Then
\[
\frac{a-b}{a}=\frac{1+t}{2}>t
\quad\Longrightarrow\quad
(a-b)^k=a^k\Big(\frac{1+t}{2}\Big)^k>a^k t^k=\frac{a^k}{k}.
\]
Thus, for this choice of $b$,
\begin{equation}\label{eq:choose_b_uniform}
(a-b)^k>\frac{a^k}{k}.
\end{equation}

\textbf{Step 3: asymptotics of $\E_\theta[R_b^k]$ as $\theta\to\infty$.}
Fix the chosen $b\in(0,a)$. Then
\[
\PP_\theta(\tau(b)=1)=\PP_\theta(X_1>b)\xrightarrow[\theta\to\infty]{}1,
\]
since $X_1\to a>b$ in probability.
On the event $\{\tau(b)=1\}$ we have $R_b=X_1-b$.
Moreover, since $X_1\le a$ almost surely and $S_{\tau(b)-1}\le b$, we have
$0\le R_b=S_{\tau(b)}-b\le a$ almost surely, hence $0\le R_b^k\le a^k$.

Decompose the expectation:
\[
\E_\theta[R_b^k]
=
\E_\theta\big[(X_1-b)^k;\,X_1>b\big]
+
\E_\theta\big[R_b^k;\,X_1\le b\big].
\]
The second term is estimated as
\[
0\le \E_\theta\big[R_b^k;\,X_1\le b\big]\le a^k\,\PP_\theta(X_1\le b)\xrightarrow[\theta\to\infty]{}0.
\]
For the first term we use the $L^k$ convergence $X_1\to a$:
\[
\E_\theta\big[(X_1-b)^k;\,X_1>b\big]
=
\E_\theta[(X_1-b)^k]-\E_\theta\big[(X_1-b)^k;\,X_1\le b\big]
\longrightarrow
(a-b)^k-0=(a-b)^k.
\]
Thus,
\begin{equation}\label{eq:limit_left_uniform}
\E_\theta[R_b^k]\ \longrightarrow\ (a-b)^k,
\qquad \theta\to\infty.
\end{equation}

\textbf{Step 4: comparison with the right-hand side of \eqref{eq:target-ru}.}
From \eqref{eq:limits_uniform_tilt} we obtain
\[
\frac{\E_\theta[(X_1^+)^{k+1}]}{k\,\mu_\theta}
\ \longrightarrow\
\frac{a^{k+1}}{k\,a}
=
\frac{a^k}{k},
\qquad \theta\to\infty.
\]
Combining this with \eqref{eq:limit_left_uniform} and \eqref{eq:choose_b_uniform}, we obtain:
for all sufficiently large $\theta$,
\[
\E_\theta[R_b^k]>\frac{\E_\theta[(X_1^+)^{k+1}]}{k\,\mu_\theta},
\]
that is, \eqref{eq:target-ru} fails.
\end{proof}

\begin{remark}
Counterexample~\ref{prop:cx_uniform_tilt} lies within the class of standard exponential families:
$F_0=\mathrm{Unif}[-\sqrt{3},\sqrt{3}]$ has $\E_0[X_1]=0$, $\Var_0(X_1)=1$, is absolutely continuous,
and therefore strongly non-lattice. Consequently, the failure of \eqref{eq:target-ru} is not a consequence of lattice effects or of the absence of exponential moments.
\end{remark}


\textbf{Acknowledgement.}~
The work of the second author was carried out within the framework of the
Fundamental Research Programme of HSE University.

\bibliographystyle{plain} 
\bibliography{references} 

\end{document}